\newtheorem{thm}{Theorem}[section]
\newtheorem{lmm}[thm]{Lemma}
\newtheorem{cor}[thm]{Corollary}
\newcommand{\im}{\mathrm{i}}        
\newcommand{\rmvec}{\mathrm{vec}}   
\newcommand{\rmtridiag}{\mathrm{tridiag}}
\newcommand{\mqty}[1]{\begin{bmatrix}#1\end{bmatrix}}
\newcommand{\mdet}[1]{\begin{vmatrix}#1\end{vmatrix}}
\title{
  Matrix equation representation of the convolution equation and its unique solvability
  }
\author{
  Yuki Satake\thanks{Department of Apllied Physics, Graduate School of Engineering, Nagoya University, Furo-cho, Chikusa-ku, Nagoya 464-8603, Japan, Email: \texttt{\{y-satake,sogabe,kemmochi,zhang\}@na.nuap.nagoya-u.ac.jp}} \and
  Tomohiro Sogabe$^\ast$ \and
  Tomoya Kemmochi$^\ast$, \and
  Shao-Liang Zhang$^\ast$
}
\date{}
\begin{document}
\maketitle
\begin{abstract}
  We consider the convolution equation~$F*X=B$, where $F\in\mathbb{R}^{3\times 3}$ and $B\in\mathbb{R}^{m\times n}$ are given, and $X\in\mathbb{R}^{m\times n}$ is to be determined.
  The convolution equation can be regarded as a linear system with a coefficient matrix of special structure.
  This fact has led to many studies including efficient numerical algorithms for solving the convolution equation.
  In this study, we show that the convolution equation can be represented as a generalized Sylvester equation.
  Furthermore, for some realistic examples arising from image processing, we show that the generalized Sylvester equation can be reduced to a simpler form, and analyze the unique solvability of the convolution equation.
\end{abstract}
\section{Introduction}
  We consider the following convolution equation
  \begin{equation}
    F*X=B, \label{eq:convolution}
  \end{equation}
  where $F=[f_{ij}]\in\mathbb{R}^{3\times 3}$ and $B=[b_{ij}]\in\mathbb{R}^{m\times n}$ are given, and $X=[x_{ij}]\in\mathbb{R}^{m\times n}$ is to be determined.
  The symbol $*$ represents the convolution operator, i.e., the left-hand side of the $(i,j)$ entry of~\eqref{eq:convolution} is defined by
  \begin{equation}
    [F*X]_{ij}:=\sum^3_{l_1=1}\sum^3_{l_2=1} f_{l_1l_2}x_{i-l_1+2, j-l_2+2}. \label{def:convolution}
  \end{equation}
  Since the above expression~\eqref{def:convolution} is only defined for cases when $2\le i\le m-1$ and $2\le j\le n-1$,
  boundary conditions to define $b_{ij}$ for cases when $i\in\{1,m\}$ or $j\in\{1,n\}$ are needed.
  In other words, the values of $x_{0j}, x_{m+1,j}, x_{i0},x_{i,n+1}$ need to be set artificially.
  In this paper, we consider the following three boundary conditions.
  \begin{itemize}
    \item Zero boundary condition: \\
    \begin{equation}
      x_{0j}=x_{m+1,j}=x_{i0}=x_{i,n+1}=0.\label{bc:zero}
    \end{equation}
    \item Periodic boundary condition: \\
    \begin{equation}
      x_{0j}=x_{mj},\quad x_{m+1,j}=x_{1j},\quad x_{i0}=x_{in},\quad x_{i,n+1}=x_{i1}.\label{bc:periodic}
    \end{equation}
    \item Reflexive boundary condition: \\
    \begin{equation}
      x_{0j}=x_{1j},\quad x_{m+1,j}=x_{mj},\quad x_{i0}=x_{i1},\quad x_{i,n+1}=x_{in}.\label{bc:reflexive}
    \end{equation}
  \end{itemize}
  
  The convolution equation~\eqref{eq:convolution} appears in image restoration problems~\cite{afonso2010,hansen_deconvolution_2002,thevenaz2000} arising from astronomy~\cite{kundur1998,starck2002} and medical imaging~\cite{michailovich2005}.
  In image restoration problems, $F$, $X$, and $B$ of~\eqref{eq:convolution} represent a filter matrix, an original image, and an observed degraded image, respectively.

  Eq.~\eqref{def:convolution} under one of~\eqref{bc:zero},~\eqref{bc:periodic},~\eqref{bc:reflexive} can be rewritten as the following linear system
  \begin{equation}
    \mathcal{F}\bm{x}=\bm{b}, \label{eq:ls}
  \end{equation}
  where $\bm{x}:=\rmvec(X)\in\mathbb{R}^{mn}$ and $\bm{b}:=\rmvec(B)\in\mathbb{R}^{mn}$, and $\rmvec$ denotes a vectorization operator (e.g.,~\cite{zhan2013}).
  The coefficient matrix $\mathcal{F}\in\mathbb{R}^{mn\times mn}$ has a special structure that depends on boundary conditions.
  Under the zero boundary condition, $\mathcal{F}$ has a block Toeplitz with Toeplitz blocks (BTTB) structure.
  Under the periodic boundary condition, $\mathcal{F}$ has a block circulant with circulant blocks (BCCB) structure.
  Under the reflexive boundary condition, $\mathcal{F}$ is a sum of BTTB, a Block Toeplitz with Hankel blocks (BTHB), a Block Hankel with Toeplitz blocks (BHTB), and a Block Hankel with Hankel blocks (BHHB), i.e., a Block-Toeplitz-plus-Hankel with Toeplitz-plus-Hankel-Blocks (BTHTHB) matrix.

  Many algorithms for solving~\eqref{eq:convolution} are based on the vectorized form~\eqref{eq:ls}.
  Although the coefficient matrix~$\mathcal{F}$ can be large, the above structures allow the solution of~\eqref{eq:ls} to be computed efficiently without explicitly constructing~$\mathcal{F}$.
  For example, the following algorithms have been proposed:
  algorithms for the inversion of BTTB matrix~\cite{wax1983}, numerical algorithms and their preconditioners for solving BTTB systems~\cite{alonso2005, chan1994, sun2001, yagle2001}, algorithms for solving BCCB systems by using the fast Fourier transform (FFT)~\cite{chen1987,rjasanow1994}, and  numerical algorithms for solving BTHTHB systems by using discrete cosine transform (DCT)~\cite{ng1999a}. 

  As described above, representing the convolution equation~\eqref{eq:convolution} as the linear system~\eqref{eq:ls} leads to prosperous results (including numerical algorithms).
  This motivates us to find a different representation.
  In this study, we provide the following representation: the convolution equation~\eqref{eq:convolution} can be transformed into a generalized Sylvester equation by using special matrices, which is our main contribution.
  This study may allow one to find mathematical features and numerical solvers of the convolution equation~\eqref{eq:convolution} by means of procedures for the generalized Sylvester equation.
  Furthermore, the generalized Sylvester equation can be reduced to simpler forms for some filter matrices that appear in image processing.
  Using this, we show the necessary and sufficient conditions for the unique solvability of the convolution equation~\eqref{eq:convolution} with the filter matrices. 

  The rest of this paper is organized as follows.
  In Section~\ref{sec2}, we show that the convolution equation~\eqref{eq:convolution} can be rewritten as a generalized Sylvester equation by using some special matrices.
  In Section~\ref{sec3}, the existence of unique solutions of~\eqref{eq:convolution} with some specific filter matrices related image processing is discussed.
  The concluding remarks are provided in Section~\ref{sec4}.
  
  Throughout this paper, $I_n\in\mathbb{R}^{n\times n}$ and $\mathbb{N}:=\{1,2,3,\ldots\}$ denote the $n\times n$ identity matrix and the set of all natural numbers, respectively.
\section{Rewriting the convolution equation to a generalized matrix equation\label{sec2}}
  In this section, we show that the convolution equation~\eqref{eq:convolution} with the zero, periodic or reflexive boundary condition can be rewritten as a generalized Sylvester equation.

  Before describing the main results, let us recall some special matrices.
  We first introduce shift matrices.
  The $n\times n$ shift matrices $U_n$ and $L_n$ are defined by
  \begin{equation}
    U_n:=\mqty{
      0 & 1 & 0 & \cdots & 0 \\
      \vdots & \ddots & \ddots & \ddots & \vdots \\
      \vdots & \ddots & \ddots & \ddots & 0 \\
      \vdots & \ddots & \ddots & \ddots & 1 \\
      0 & \cdots & \cdots & \cdots & 0}\in\mathbb{R}^{n\times n}, \qquad
    L_n:=\mqty{
      0 & \cdots & \cdots & \cdots & 0 \\
      1 & \ddots & \ddots & \ddots & \vdots \\
      0 & \ddots & \ddots & \ddots & \vdots\\
      \vdots & \ddots & \ddots & \ddots & \vdots \\
      0 & \cdots & 0 & 1 & 0}\in\mathbb{R}^{n\times n}.\label{shiftmatrix}
  \end{equation}
  The matrices $U_n$ and $L_n$ are called an upper shift matrix and a lower shift matrix respectively.
  The upper and lower shift matrices represent linear transformations that shift the components of column vectors one position up and down respectively, i.e., it follows that
  \begin{equation}
    U_m X=\mqty{
      x_{21} & x_{22} & \cdots & x_{2n} \\
      x_{31} & x_{32} & \cdots & x_{3n} \\
      \vdots & \vdots & \cdots & \vdots \\
      x_{m1} & x_{m2} & \cdots & x_{mn} \\
      0      & 0      & \cdots & 0
    }, \qquad
    L_m X=\mqty{
      0      & 0      & \cdots & 0      \\
      x_{11} & x_{12} & \cdots & x_{1n} \\
      x_{21} & x_{22} & \cdots & x_{2n} \\
      \vdots & \vdots & \cdots & \vdots \\
      x_{m-1,1} & x_{m-1,2} & \cdots & x_{m-1,n} 
    },\label{shift-x}
  \end{equation}
  where $X=[x_{ij}]\in\mathbb{R}^{m\times n}$.
  Similarly, it is clear that 
  \begin{equation}
    XL_n=\mqty{
      x_{12} & x_{13} & \cdots & x_{1n} & 0 \\
      x_{22} & x_{23} & \cdots & x_{2n} & 0 \\
      \vdots & \vdots & \vdots & \vdots & \vdots \\
      x_{m2} & x_{m3} & \cdots & x_{mn} & 0
    }, \qquad
    XU_n=\mqty{
      0      & x_{11} & x_{12} & \cdots & x_{1,n-1} \\
      0      & x_{21} & x_{22} & \cdots & x_{2,n-1} \\
      \vdots & \vdots & \vdots & \vdots & \vdots \\
      0      & x_{m1} & x_{m2} & \cdots & x_{m, n-1}
    }.\label{x-shift}
  \end{equation}
  From~\eqref{shift-x} and~\eqref{x-shift}, the following properties hold:
  \begin{align}
    \begin{aligned}
      U_m XL_n&=\mqty{
        x_{22} & \cdots & x_{2n} & 0 \\
        \vdots & \ddots & \vdots & \vdots \\
        x_{m2} & \cdots & x_{mn} & \vdots \\
        0      & \cdots & \cdots & 0
      },&
      U_m XU_n&=\mqty{
        0      & x_{21} & \cdots & x_{2,n-1} \\
        \vdots & \vdots & \ddots & \vdots \\
        \vdots & x_{m1} & \cdots & x_{m, n-1} \\
        0      & \cdots & \cdots & 0
      },\\
      L_m XL_n&=\mqty{
        0         & \cdots & \cdots & 0 \\
        x_{12}    & \cdots & x_{1n} & \vdots \\
        \vdots    & \ddots & \vdots & \vdots \\
        x_{m-1,2} & \cdots & x_{m-1,n} & 0
      },&
      L_m XU_n&=\mqty{
        0      & \cdots    & \cdots & 0 \\
        \vdots & x_{11}    & \cdots & x_{1,n-1} \\
        \vdots & \vdots    & \ddots & \vdots \\
        0      & x_{m-1,1} & \cdots & x_{m-1,n-1}
      }.\label{shift-x-shift}
    \end{aligned}
  \end{align}
  
  Next, we introduce cyclic shift matrices.
  The $n\times n$ cyclic shift matrices $U_n^{(\mathrm{P})}$ and $L_n^{(\mathrm{P})}$ are defined by
  \begin{equation}
    U_n^{(\mathrm{P})}:=\mqty{
      0 & 1 & 0 & \cdots & 0 \\
      \vdots & \ddots & \ddots & \ddots & \vdots \\
      \vdots & \ddots & \ddots & \ddots & 0 \\
      0      & \ddots & \ddots & \ddots & 1 \\
      1      & 0      & \cdots & \cdots & 0}\in\mathbb{R}^{n\times n}, \qquad
    L_n^{(\mathrm{P})}:=\mqty{
      0 & \cdots & \cdots & 0      & 1 \\
      1 & \ddots & \ddots & \ddots & 0 \\
      0 & \ddots & \ddots & \ddots & \vdots\\
      \vdots & \ddots & \ddots & \ddots & \vdots \\
      0 & \cdots & 0 & 1 & 0}\in\mathbb{R}^{n\times n}.\label{cshiftmatrix}
  \end{equation}
  A straightforward calculation yields
  \begin{align}
    \begin{aligned}
    U_m^{(\mathrm{P})} X&=\mqty{
      x_{21} & x_{22} & \cdots & x_{2n} \\
      x_{31} & x_{32} & \cdots & x_{3n} \\
      \vdots & \vdots & \cdots & \vdots \\
      x_{m1} & x_{m2} & \cdots & x_{mn} \\
      x_{11} & x_{12} & \cdots & x_{1n}
    }, &
    L_m^{(\mathrm{P})}X&=\mqty{
      x_{m1} & x_{m2} & \cdots & x_{mn}  \\
      x_{11} & x_{12} & \cdots & x_{1n} \\
      x_{21} & x_{22} & \cdots & x_{2n} \\
      \vdots & \vdots & \cdots & \vdots \\
      x_{m-1,1} & x_{m-1,2} & \cdots & x_{m-1,n} 
      },\\
    XL_n^{(\mathrm{P})}&=\mqty{
      x_{12} & x_{13} & \cdots & x_{1n} & x_{11} \\
      x_{22} & x_{23} & \cdots & x_{2n} & x_{21}\\
      \vdots & \vdots & \vdots & \vdots & \vdots \\
      x_{m2} & x_{m3} & \cdots & x_{mn} & x_{m1}
      }, &
    XU_n^{(\mathrm{P})}&=\mqty{
      x_{1n} & x_{11} & x_{12} & \cdots & x_{1,n-1} \\
      x_{2n}  & x_{21} & x_{22} & \cdots & x_{2,n-1} \\
      \vdots & \vdots & \vdots & \vdots & \vdots \\
      x_{mn} & x_{m1} & x_{m2} & \cdots & x_{m, n-1}
    }.\label{x-cshift}
    \end{aligned}
  \end{align}
  Similarly, we have
  \begin{align}
    \begin{aligned}
      U_m^{(\mathrm{P})} XL_n^{(\mathrm{P})}&=\mqty{
        x_{22} & \cdots & x_{2n} & x_{21} \\
        \vdots & \ddots & \vdots & \vdots \\
        x_{m2} & \cdots & x_{mn} & x_{m1} \\
        x_{12}      & \cdots & x_{1n} & x_{11}
      },&
      U_m^{(\mathrm{P})} X U_n^{(\mathrm{P})}&=\mqty{
        x_{2n} & x_{21} & \cdots & x_{2,n-1} \\
        \vdots & \vdots & \ddots & \vdots \\
        x_{mn} & x_{m1} & \cdots & x_{m, n-1} \\
        x_{1n} & x_{11} & \cdots & x_{1,n-1}
      },\\
      L_m^{(\mathrm{P})} XL_n^{(\mathrm{P})}&=\mqty{
        x_{m2}    & \cdots & x_{mn} & x_{m1} \\
        x_{12}    & \cdots & x_{1n} & x_{11} \\
        \vdots    & \ddots & \vdots & \vdots \\
        x_{m-1,2} & \cdots & x_{m-1,n} & x_{m-1, 1}
      },&
      L_m^{(\mathrm{P})}XU_n^{(\mathrm{P})}&=\mqty{
        x_{mn}    & x_{m1}    & \cdots & x_{m,n-1} \\
        x_{1n}    & x_{11}    & \cdots & x_{1,n-1} \\
        \vdots    & \vdots    & \ddots & \vdots \\
        x_{m-1,n} & x_{m-1,1} & \cdots & x_{m-1,n-1}
      }.\label{cshift-x-cshift}
    \end{aligned}
  \end{align}

  Here we define the following matrices
  \begin{equation}
    U_n^{(\mathrm{R})}:=\mqty{
      0      & 1      & 0      & \cdots & 0 \\
      0      & \ddots & \ddots & \ddots & \vdots \\
      \vdots & \ddots & \ddots & \ddots & 0 \\
      \vdots & \ddots & \ddots & 0      & 1 \\
      0      & \cdots & \cdots & 0      & 1}\in\mathbb{R}^{n\times n}, \qquad
    L_n^{(\mathrm{R})}:=\mqty{
      1      & 0      & \cdots & \cdots & 0 \\
      1      & 0      & \ddots & \ddots & \vdots \\
      0      & \ddots & \ddots & \ddots & \vdots \\
      \vdots & \ddots & \ddots & \ddots & 0 \\
      0      & \cdots & 0      & 1      & 0}\in\mathbb{R}^{n\times n}.\label{qshiftmatrix}
  \end{equation}
  For products of~\eqref{qshiftmatrix} and a matrix $X\in\mathbb{R}^{m\times n}$, it follows that
  \begin{align}
    \begin{aligned}
      U_m^{(\mathrm{R})} X&=\mqty{
      x_{21} & x_{22} & \cdots & x_{2n} \\
      x_{31} & x_{32} & \cdots & x_{3n} \\
      \vdots & \vdots & \cdots & \vdots \\
      x_{m1} & x_{m2} & \cdots & x_{mn} \\
      x_{m1} & x_{m2} & \cdots & x_{mn}
      }, &
      L_m^{(\mathrm{R})}X&=\mqty{
        x_{11} & x_{12} & \cdots & x_{1n} \\
        x_{11} & x_{12} & \cdots & x_{1n} \\
        x_{21} & x_{22} & \cdots & x_{2n} \\
        \vdots & \vdots & \cdots & \vdots \\
        x_{m-1,1} & x_{m-1,2} & \cdots & x_{m-1,n} 
        },\\
      X{U_n^{(\mathrm{R})}}^\top&=\mqty{
        x_{12} & x_{13} & \cdots & x_{1n} & x_{1n} \\
        x_{22} & x_{23} & \cdots & x_{2n} & x_{2n}\\
        \vdots & \vdots & \vdots & \vdots & \vdots \\
        x_{m2} & x_{m3} & \cdots & x_{mn} & x_{mn}
        }, &
      X{L_n^{(\mathrm{R})}}^\top&=\mqty{
        x_{11} & x_{11} & x_{12} & \cdots & x_{1,n-1} \\
        x_{21}  & x_{21} & x_{22} & \cdots & x_{2,n-1} \\
        \vdots & \vdots & \vdots & \vdots & \vdots \\
        x_{m1} & x_{m1} & x_{m2} & \cdots & x_{m, n-1}
      },\\
      U_m^{(\mathrm{R})} X{U_n^{(\mathrm{R})}}^\top&=\mqty{
        x_{22} & \cdots & x_{2n} & x_{2n} \\
        \vdots & \ddots & \vdots & \vdots \\
        x_{m2} & \cdots & x_{mn} & x_{mn} \\
        x_{m2} & \cdots & x_{mn} & x_{mn}
      },&
      U_m^{(\mathrm{R})} X {L_n^{(\mathrm{R})}}^\top&=\mqty{
        x_{21} & x_{21} & \cdots & x_{2,n-1} \\
        \vdots & \vdots & \ddots & \vdots \\
        x_{m1} & x_{m1} & \cdots & x_{m, n-1} \\
        x_{m1} & x_{m1} & \cdots & x_{m,n-1}
      },\\
      L_m^{(\mathrm{R})} X{U_n^{(\mathrm{R})}}^\top&=\mqty{
        x_{12}    & \cdots & x_{1n} & x_{1n} \\
        x_{12}    & \cdots & x_{1n} & x_{1n} \\
        \vdots    & \ddots & \vdots & \vdots \\
        x_{m-1,2} & \cdots & x_{m-1,n} & x_{m-1, n}
      },&
      L_m^{(\mathrm{R})}X{L_n^{(\mathrm{R})}}^\top&=\mqty{
        x_{11}    & x_{11}    & \cdots & x_{1,n-1} \\
        x_{11}    & x_{11}    & \cdots & x_{1,n-1} \\
        \vdots    & \vdots    & \ddots & \vdots \\
        x_{m-1,1} & x_{m-1,1} & \cdots & x_{m-1,n-1}
      }.\label{qshift-x-qshift}
    \end{aligned}
  \end{align}

  We can now state our main results.
  By using the shift matrices~\eqref{shiftmatrix}, the convolution equation with the zero boundary condition can be rewritten as a generalized Sylvester equation.
  \begin{thm}\label{thm:zero}
    {\bfseries(Zero boundary condition)}
    Let $F\in\mathbb{R}^{3\times 3}$, $X\in\mathbb{R}^{m\times n}$ and $B\in\mathbb{R}^{m\times n}$.
    Then, the convolution equation $F*X=B$ with the zero boundary condition is equivalent to a generalized Sylvester equation
    \begin{equation}
      F_1 XL_n + F_2 X + F_3 XU_n = B, \label{eq:gen-Sylvester}
    \end{equation}
    where 
    \begin{equation}
      F_i:=\rmtridiag(f_{1i},f_{2i}, f_{3i})=\mqty{
        f_{2i} & f_{1i} & & \\
        f_{3i} & f_{2i} & \ddots & \\
               & \ddots & \ddots & f_{1i} \\
               &        & f_{3i} & f_{2i}
      }\in\mathbb{R}^{m\times m}, \qquad i = 1, 2, 3,
    \end{equation}
    and $U_n, L_n\in\mathbb{R}^{n\times n}$ are shift matrices~\eqref{shiftmatrix}.
  \end{thm}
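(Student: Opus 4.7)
The plan is to verify the identity entrywise. The key observation is that each of the three matrix products on the left-hand side of~\eqref{eq:gen-Sylvester} corresponds to one of the three column-positions of the filter $F$, indexed by $l_2 \in \{1,2,3\}$ in~\eqref{def:convolution}: the product $F_i X \cdot (\text{column shift})$ picks up precisely the $l_2 = i$ contributions. The tridiagonal structure of each $F_i$ supplies the three row-neighbors indexed by $l_1 \in \{1,2,3\}$, and the right-factors $L_n$, $I_n$, $U_n$ supply the three column-neighbors.

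Concretely, I would first compute, using the stated action of shift matrices in~\eqref{x-shift} together with the tridiagonal definition of $F_i$, that for any interior $(i,j)$
\begin{align}
(F_1 X L_n)_{ij} &= f_{31}x_{i-1,j+1} + f_{21}x_{i,j+1} + f_{11}x_{i+1,j+1},\\
(F_2 X)_{ij}     &= f_{32}x_{i-1,j} + f_{22}x_{i,j} + f_{12}x_{i+1,j},\\
(F_3 X U_n)_{ij} &= f_{33}x_{i-1,j-1} + f_{23}x_{i,j-1} + f_{13}x_{i+1,j-1}.
\end{align}
Summing these and matching the index substitution $l_1 \mapsto i - l_1 + 2$, $l_2 \mapsto j - l_2 + 2$ shows that the sum reproduces $\sum_{l_1,l_2=1}^{3} f_{l_1 l_2} x_{i-l_1+2,\, j-l_2+2} = [F*X]_{ij}$ exactly, for $2 \le i \le m-1$ and $2 \le j \le n-1$.

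For the boundary rows and columns, I would then check that the zero-boundary convention is automatically built in. Specifically, the last row of $U_m$ and the first row of $L_m$ are zero, so $(F_i X)_{1,j}$ and $(F_i X)_{m,j}$ drop exactly the terms that would involve $x_{0,j}$ or $x_{m+1,j}$; likewise, the last column of $L_n$ and the first column of $U_n$ are zero, so $(F_1 X L_n)_{i,n} = 0$ and $(F_3 X U_n)_{i,1} = 0$ drop exactly the terms that would involve $x_{i,n+1}$ or $x_{i,0}$. Hence the same entrywise identity extends to all $(i,j)$ with $1 \le i \le m$, $1 \le j \le n$, under the zero boundary condition~\eqref{bc:zero}.

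The argument is essentially bookkeeping rather than a deep combinatorial identity; the only place care is needed is in the sign/position conventions for the shift matrices (upper vs.\ lower) versus the index shift $l_2 \mapsto j - l_2 + 2$ in~\eqref{def:convolution}. In particular one must confirm that $L_n$ on the right of $X$ advances the column index ($j \to j+1$, corresponding to $l_2 = 1$), while $U_n$ on the right of $X$ retracts it ($j \to j-1$, corresponding to $l_2 = 3$); this is the main place where a sign error could appear, but it is settled directly by~\eqref{x-shift}. Once this is pinned down, the equivalence between~\eqref{eq:convolution} with the zero boundary condition and~\eqref{eq:gen-Sylvester} follows.
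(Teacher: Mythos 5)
Your proposal is correct and follows essentially the same route as the paper: both decompose the convolution into the nine products of left shift matrices $U_m, I_m, L_m$ and right shift matrices $L_n, I_n, U_n$, collect the row-direction terms into the tridiagonal factors $F_i$, and note that the zero rows/columns of the shift matrices encode the zero boundary condition. Your grouping by the column index $l_2$ and the explicit check of the boundary rows and columns is just a slightly more detailed write-up of the identity the paper states in one display.
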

  \begin{proof}
    From~\eqref{def:convolution},~\eqref{bc:zero},~\eqref{shift-x},~\eqref{x-shift} and~\eqref{shift-x-shift}, it follows that
    \begin{align}
      (F*X)_{ij}=&
       f_{33}(L_mXU_n)_{ij} + f_{32}(L_mX)_{ij} + f_{31}(L_mXL_n)_{ij} + f_{23}(XU_n)_{ij} \\
       & \quad + f_{22}(X)_{ij}  + f_{21}(XL_n)_{ij} + f_{13}(U_mXU_n)_{ij} + f_{12}(U_mX)_{ij} + f_{11}(U_mXL_n)_{ij}=b_{ij},
    \end{align}
    which implies
    \begin{equation}
      \mqty{
        f_{21} & f_{11} &        & \\
        f_{31} & f_{21} & \ddots & \\
               & \ddots & \ddots & f_{11} \\
               &        & f_{31} & f_{21}
      }XL_n + 
      \mqty{
        f_{22} & f_{12} &        & \\
        f_{32} & f_{22} & \ddots & \\
               & \ddots & \ddots & f_{12} \\
               &        & f_{32} & f_{22}
      }X + 
      \mqty{
        f_{23} & f_{13} &        & \\
        f_{33} & f_{23} & \ddots & \\
               & \ddots & \ddots & f_{13} \\
               &        & f_{33} & f_{23}
      }XU_n = B.
    \end{equation}
    Hence, $F*X=B$ with the zero boundary condition is equivalent to the generalized Sylvester equation~\eqref{eq:gen-Sylvester}.
  \end{proof}

  By using the cyclic shift matrices~\eqref{cshiftmatrix}, the convolution equation with the periodic boundary condition can be rewritten as a generalized Sylvester equation.
  \begin{thm}\label{thm:periodic}
    {\bfseries(Periodic boundary condition)}
    Let $F\in\mathbb{R}^{3\times 3}$, $X\in\mathbb{R}^{m\times n}$ and $B\in\mathbb{R}^{m\times n}$.
    Then, the convolution equation $F*X=B$ with the periodic boundary condition is equivalent to a generalized Sylvester equation
    \begin{equation}
      \tilde{F}_1 XL_n^{(\mathrm{P})} + \tilde{F}_2 X + \tilde{F}_3 XU_n^{(\mathrm{P})} = B, \label{eq:gen-Sylvester:p}
    \end{equation}
    where 
    \begin{equation}
      \tilde{F}_i:=\rmtridiag(f_{1i},f_{2i}, f_{3i}) + 
      \mqty{
        0      & 0      & \cdots & 0      & f_{3i} \\
        0      & 0      & \cdots & 0      & 0      \\
        \vdots & \vdots & \ddots & \vdots & \vdots \\
        0      & 0      & \cdots & 0      & 0 \\
        f_{1i} & 0      & \cdots & 0      & 0 
      }
      \in\mathbb{R}^{m\times m}, \qquad i = 1, 2, 3,
    \end{equation}
    and $U_n^{(\mathrm{P})}, L_n^{(\mathrm{P})}\in\mathbb{R}^{n\times n}$ are cyclic shift matrices~\eqref{cshiftmatrix}.
  \end{thm}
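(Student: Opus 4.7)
The plan is to follow the proof of Theorem~\ref{thm:zero} line for line, replacing every ordinary shift matrix by its cyclic counterpart. The periodic boundary condition~\eqref{bc:periodic} is precisely what legitimises this substitution, because the identities gathered in~\eqref{x-cshift} and~\eqref{cshift-x-cshift} recover the shifted entries of $X$ together with the wrap-around that~\eqref{bc:periodic} imposes.

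First I would recognise that, for each $(l_1,l_2)\in\{1,2,3\}^2$ and each admissible $(i,j)$, the entry $x_{i-l_1+2,\,j-l_2+2}$ appearing in~\eqref{def:convolution}, read cyclically via~\eqref{bc:periodic}, is exactly the $(i,j)$ entry of one of the nine matrix products displayed in~\eqref{x-cshift}--\eqref{cshift-x-cshift}. Substituting and collecting yields
\begin{align*}
(F*X)_{ij} &= f_{33}(L_m^{(\mathrm{P})}XU_n^{(\mathrm{P})})_{ij} + f_{32}(L_m^{(\mathrm{P})}X)_{ij} + f_{31}(L_m^{(\mathrm{P})}XL_n^{(\mathrm{P})})_{ij} \\
&\quad + f_{23}(XU_n^{(\mathrm{P})})_{ij} + f_{22}x_{ij} + f_{21}(XL_n^{(\mathrm{P})})_{ij} \\
&\quad + f_{13}(U_m^{(\mathrm{P})}XU_n^{(\mathrm{P})})_{ij} + f_{12}(U_m^{(\mathrm{P})}X)_{ij} + f_{11}(U_m^{(\mathrm{P})}XL_n^{(\mathrm{P})})_{ij} = b_{ij}.
\end{align*}
Grouping the nine terms by their rightmost factor ($L_n^{(\mathrm{P})}$, $I_n$, or $U_n^{(\mathrm{P})}$) and using bilinearity of matrix multiplication rewrites this as
\[
G_1 X L_n^{(\mathrm{P})} + G_2 X + G_3 X U_n^{(\mathrm{P})} = B, \qquad G_i := f_{1i}U_m^{(\mathrm{P})} + f_{2i}I_m + f_{3i}L_m^{(\mathrm{P})}.
\]
A direct inspection of~\eqref{cshiftmatrix} then shows that $G_i$ has $f_{2i}$ on the diagonal, $f_{1i}$ on the superdiagonal \emph{and} at the $(m,1)$ corner (from the wrap-around in $U_m^{(\mathrm{P})}$), and $f_{3i}$ on the subdiagonal \emph{and} at the $(1,m)$ corner (from the wrap-around in $L_m^{(\mathrm{P})}$). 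Matching this against the statement gives $G_i=\tilde F_i$, which establishes~\eqref{eq:gen-Sylvester:p}.

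The only place requiring genuine care is the bookkeeping at the boundary indices $i\in\{1,m\}$ or $j\in\{1,n\}$: in Theorem~\ref{thm:zero} these cases were absorbed by the zero-padding structure of $U_n$ and $L_n$, whereas here they must be absorbed by the single off-diagonal corner entry of each cyclic shift matrix. It is precisely these corner entries that account for the two extra nonzero positions which distinguish $\tilde F_i$ from $\rmtridiag(f_{1i},f_{2i},f_{3i})$, so verifying the correspondence at the four corners $(1,1)$, $(1,n)$, $(m,1)$, $(m,n)$ is the substantive check; everything else is mechanical.
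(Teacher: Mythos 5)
Your proposal is correct and follows essentially the same route as the paper: expand the periodic convolution into the nine products $f_{l_1 l_2}\,(M_{l_1}XN_{l_2})_{ij}$ using the cyclic-shift identities, group by the right factor, and read off $\tilde F_i = f_{1i}U_m^{(\mathrm{P})} + f_{2i}I_m + f_{3i}L_m^{(\mathrm{P})}$. The only cosmetic difference is that you write the coefficient matrices as explicit linear combinations of $U_m^{(\mathrm{P})}$, $I_m$, $L_m^{(\mathrm{P})}$, whereas the paper displays them entrywise.
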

  \begin{proof}
    From~\eqref{def:convolution},~\eqref{bc:periodic},~\eqref{x-cshift} and~\eqref{cshift-x-cshift}, it follows that
    \begin{align}
      (F*X)_{ij}=&
       f_{33}\left(L_m^{(\mathrm{P})}XU_n^{(\mathrm{P})}\right)_{ij} + f_{32}\left(L_m^{(\mathrm{P})}X\right)_{ij} + f_{31}\left(L_m^{(\mathrm{P})}XL_n^{(\mathrm{P})}\right)_{ij} + f_{23}\left(XU_n^{(\mathrm{P})}\right)_{ij} \\
       & \quad + f_{22}(X)_{ij}  + f_{21}\left(XL_n^{(\mathrm{P})}\right)_{ij} + f_{13}\left(U_m^{(\mathrm{P})}XU_n^{(\mathrm{P})}\right)_{ij} + f_{12}\left(U_m^{(\mathrm{P})}X\right)_{ij} + f_{11}\left(U_m^{(\mathrm{P})}XL_n^{(\mathrm{P})}\right)_{ij}=b_{ij},
    \end{align}
    which implies
    \begin{align}
      &\mqty{
        f_{21} & f_{11} & 0      & \cdots & 0      & f_{31} \\
        f_{31} & f_{21} & f_{11} & \ddots & \ddots & 0 \\
        0      & f_{31} & \ddots & \ddots & \ddots & \vdots \\
        \vdots & \ddots & \ddots & \ddots & \ddots & 0 \\
        0      & \ddots & \ddots & \ddots & \ddots & f_{11} \\
        f_{11} & 0      & \cdots & 0      & f_{31} & f_{21}
      }XL_n^{(\mathrm{P})}
      + \mqty{
        f_{22} & f_{12} & 0      & \cdots & 0      & f_{32} \\
        f_{32} & f_{22} & f_{12} & \ddots & \ddots & 0 \\
        0      & f_{32} & \ddots & \ddots & \ddots & \vdots \\
        \vdots & \ddots & \ddots & \ddots & \ddots & 0 \\
        0      & \ddots & \ddots & \ddots & \ddots & f_{12} \\
        f_{12} & 0      & \cdots & 0      & f_{32} & f_{22}
      }X \\
      &\quad + 
      \mqty{
        f_{23} & f_{13} & 0      & \cdots & 0      & f_{33} \\
        f_{33} & f_{23} & f_{13} & \ddots & \ddots & 0 \\
        0      & f_{33} & \ddots & \ddots & \ddots & \vdots \\
        \vdots & \ddots & \ddots & \ddots & \ddots & 0 \\
        0      & \ddots & \ddots & \ddots & \ddots & f_{13} \\
        f_{13} & 0      & \cdots & 0      & f_{33} & f_{23}
      }XU_n^{(\mathrm{P})}=B.
    \end{align}
    Hence, $F*X=B$ with the periodic boundary condition is equivalent to the generalized Sylvester equation~\eqref{eq:gen-Sylvester:p}.
  \end{proof}

  By using the tridiagonal matrices~\eqref{qshiftmatrix}, the convolution equation with the reflexive boundary condition can be rewritten as a generalized Sylvester equation.
  \begin{thm}\label{thm:reflexive}
    {\bfseries(Reflexive boundary condition)}
    Let $F\in\mathbb{R}^{3\times 3}$, $X\in\mathbb{R}^{m\times n}$ and $B\in\mathbb{R}^{m\times n}$.
    Then, the convolution equation $F*X=B$ with the reflexive boundary condition is equivalent to a generalized Sylvester equation
    \begin{equation}
      \hat{F}_1 X{U_n^{(\mathrm{R})}}^\top + \hat{F}_2 X + \hat{F}_3 X{L_n^{(\mathrm{R})}}^\top = B, \label{eq:gen-Sylvester:r}
    \end{equation}
    where 
    \begin{equation}
      \hat{F}_i:=\rmtridiag(f_{1i},f_{2i}, f_{3i}) + 
      \mqty{
        f_{3i} & 0      & \cdots & 0      & 0 \\
        0      & 0      & \cdots & 0      & 0      \\
        \vdots & \vdots & \ddots & \vdots & \vdots \\
        0      & 0      & \cdots & 0      & 0 \\
        0      & 0      & \cdots & 0      & f_{1i} 
      }
      \in\mathbb{R}^{m\times m}, \qquad i = 1, 2, 3,
    \end{equation}
    and $U_n^{(\mathrm{R})}, L_n^{(\mathrm{R})}\in\mathbb{R}^{n\times n}$ are defined by~\eqref{qshiftmatrix}.
  \end{thm}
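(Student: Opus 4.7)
The plan is to follow the same strategy used for Theorems~\ref{thm:zero} and~\ref{thm:periodic}: expand $(F*X)_{ij}$ into nine terms using the definition~\eqref{def:convolution} under the reflexive boundary condition~\eqref{bc:reflexive}, identify each term with an entry of a matrix product involving the reflexive shift matrices in~\eqref{qshiftmatrix}, and then collect the terms by the column-shift index~$l_2$.

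First I would write
\begin{equation*}
  (F*X)_{ij} = \sum_{l_1=1}^{3}\sum_{l_2=1}^{3} f_{l_1 l_2}\, \hat{x}_{i-l_1+2,\,j-l_2+2},
\end{equation*}
where $\hat{x}$ denotes the reflexive extension of $X$. The identities listed in~\eqref{qshift-x-qshift} (together with the two single-sided products $U_m^{(\mathrm{R})}X$ and $L_m^{(\mathrm{R})}X$ displayed there) show that each term $f_{l_1 l_2}\,\hat{x}_{i-l_1+2,\,j-l_2+2}$ is equal to $f_{l_1 l_2}(A_{l_1} X B_{l_2})_{ij}$ for the choices $A_1 = U_m^{(\mathrm{R})}$, $A_2 = I_m$, $A_3 = L_m^{(\mathrm{R})}$ and $B_1 = {U_n^{(\mathrm{R})}}^{\top}$, $B_2 = I_n$, $B_3 = {L_n^{(\mathrm{R})}}^{\top}$. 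Summing the nine contributions and grouping by $l_2$ gives
\begin{equation*}
  \sum_{l_2=1}^{3}\left(\sum_{l_1=1}^{3} f_{l_1 l_2} A_{l_1}\right) X B_{l_2} = B.
\end{equation*}

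To conclude, I would match the three inner sums with $\hat{F}_1, \hat{F}_2, \hat{F}_3$. Writing $E_{kk}$ for the matrix with a single~$1$ in position $(k,k)$, the elementary decompositions $U_m^{(\mathrm{R})} = U_m + E_{m,m}$ and $L_m^{(\mathrm{R})} = L_m + E_{1,1}$ give
\begin{equation*}
  f_{1i} U_m^{(\mathrm{R})} + f_{2i} I_m + f_{3i} L_m^{(\mathrm{R})} = \rmtridiag(f_{1i},f_{2i},f_{3i}) + f_{3i} E_{1,1} + f_{1i} E_{m,m},
\end{equation*}
which is exactly $\hat{F}_i$ as stated in the theorem.

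The one subtle point, compared with the zero and periodic boundary cases, is that $L_n^{(\mathrm{R})}$ is \emph{not} the transpose of $U_n^{(\mathrm{R})}$ (unlike $L_n = U_n^{\top}$ and $L_n^{(\mathrm{P})} = {U_n^{(\mathrm{P})}}^{\top}$). This is precisely why the transposes ${U_n^{(\mathrm{R})}}^{\top}$ and ${L_n^{(\mathrm{R})}}^{\top}$ appear on the right of $X$ in~\eqref{eq:gen-Sylvester:r}, producing the asymmetry visible in the theorem. Keeping this orientation straight when reading off the reflexive column shifts from~\eqref{qshift-x-qshift} is the only place where care is required; otherwise, the computation is routine bookkeeping entirely parallel to the proofs of Theorems~\ref{thm:zero} and~\ref{thm:periodic}.
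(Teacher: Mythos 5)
Your proposal is correct and follows essentially the same route as the paper's proof: expand $(F*X)_{ij}$ into the nine terms $f_{l_1 l_2}(A_{l_1} X B_{l_2})_{ij}$ using the identities in~\eqref{qshift-x-qshift} and collect by the column-shift index, with the decompositions $U_m^{(\mathrm{R})}=U_m+E_{mm}$ and $L_m^{(\mathrm{R})}=L_m+E_{11}$ yielding the matrices $\hat{F}_i$. Your closing remark about $L_n^{(\mathrm{R})}\neq{U_n^{(\mathrm{R})}}^{\top}$ correctly identifies the only point where the reflexive case differs from the other two.
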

  \begin{proof}
    From~\eqref{def:convolution},~\eqref{bc:reflexive} and~\eqref{qshift-x-qshift}, it follows that
    \begin{align}
      (F*X)_{ij}=&
      f_{33}\left(L_m^{(\mathrm{R})}X{L_n^{(\mathrm{R})}}^\top\right)_{ij} + f_{32}\left(L_m^{(\mathrm{R})}X\right)_{ij} + f_{31}\left(L_m^{(\mathrm{R})}X{U_n^{(\mathrm{R})}}^\top\right)_{ij} + f_{23}\left(X{L_n^{(\mathrm{R})}}^\top\right)_{ij} \\
      & \quad + f_{22}(X)_{ij}  + f_{21}\left(X{U_n^{(\mathrm{R})}}^\top\right)_{ij} + f_{13}\left(U_m^{(\mathrm{R})}X{L_n^{(\mathrm{R})}}^\top\right)_{ij} + f_{12}\left(U_m^{(\mathrm{R})}X\right)_{ij} + f_{11}\left(U_m^{(\mathrm{R})}X{U_n^{(\mathrm{R})}}^\top\right)_{ij}=b_{ij},
    \end{align}
    which implies
    \begin{align}
      &\mqty{
        f_{21} + f_{31} & f_{11} &   &     & \\
        f_{31} & f_{21} & \ddots & & \\
               & \ddots & \ddots & \ddots & \\
               &        & \ddots & f_{21} & f_{11} \\
               &        &        & f_{31} & f_{21} + f_{11}
      }X{U_n^{(\mathrm{R})}}^\top + 
      \mqty{
        f_{22} + f_{32} & f_{12} &   &     & \\
        f_{32} & f_{22} & \ddots & & \\
               & \ddots & \ddots & \ddots & \\
               &        & \ddots & f_{22} & f_{12} \\
               &        &        & f_{32} & f_{22} + f_{12}
      }X \\
      &\quad + 
      \mqty{
        f_{23} + f_{33} & f_{13} &   &     & \\
        f_{33} & f_{23} & \ddots & & \\
               & \ddots & \ddots & \ddots & \\
               &        & \ddots & f_{23} & f_{13} \\
               &        &        & f_{33} & f_{23} + f_{13}
      }X{L_n^{(\mathrm{R})}}^\top=B.
    \end{align}
    Hence, $F*X=B$ with the reflexive boundary condition is equivalent to the generalized Sylvester equation~\eqref{eq:gen-Sylvester:r}.
  \end{proof}
\section{Existence of unique solutions of the generalized Sylvester equations\label{sec3}}
  This section deals with some realistic filters~$F\in\mathbb{R}^{3\times 3}$ arising from image processing and discusses the uniqueness of solutions of the convolution equation~\eqref{eq:convolution} for each filter using our main results.
  Here, we consider the following filter matrices.
  \begin{enumerate}
    \item Box blur filter (BOX):\\
      \begin{equation}
        F_{\rm BOX}=\frac{1}{9}\mqty{1&1&1\\1&1&1\\1&1&1}.\label{f-box}
      \end{equation}
    \item Gaussian blur filter (GUS):\\
      \begin{equation}
        F_{\rm GUS}=\frac{1}{16}\mqty{1&2&1\\2&4&2\\1&2&1}.\label{f-gus}
      \end{equation}
    \item Edge detect A filter (EDA):\\
      \begin{equation}
        F_{\rm EDA}=\mqty{1&0&-1\\0&0&0\\-1&0&1}.\label{f-eda}
      \end{equation}
    \item Edge detect B filter (EDB):\\
      \begin{equation}
        F_{\rm EDB}=\mqty{0&-1&0\\-1&4&-1\\0&-1&0}.\label{f-edb}
      \end{equation}
    \item Edge detect C filter (EDC):\\
      \begin{equation}
        F_{\rm EDC}=\mqty{-1&-1&-1\\-1&8&-1\\-1&-1&-1}.\label{f-edc}
      \end{equation}
    \item Sharpen filter (SHP):\\
      \begin{equation}
        F_{\rm SHP}=\mqty{0&-1&0\\-1&5&-1\\0&-1&0}.\label{f-shp}
      \end{equation}
    \item Emboss filter (EMB):\\
      \begin{equation}
        F_{\rm EMB}=\mqty{-2&-1&0\\-1&1&1\\0&1&2}.\label{f-emb}
      \end{equation}
  \end{enumerate}
  Before discussing the existence of unique solutions of the convolution equations~\eqref{eq:convolution} for each filter, let us recall the following facts:
  \begin{lmm}\label{lmm:trieig}
    Let $A\in\mathbb{R}^{n\times n}$ be a tridiagonal Toeplitz matrix:
    \begin{equation}
      A = \rmtridiag(\beta, \alpha, \gamma) = 
      \mqty{
        \alpha & \beta  &        &        \\
        \gamma & \alpha & \ddots &        \\
               & \ddots & \ddots & \beta  \\
               &        & \gamma & \alpha
      }.
    \end{equation}
    Then, the eigenvalues of $A$ are given by
    \begin{equation}
      \lambda_k=\alpha + 2\sqrt{\beta\gamma}\cos\left(\frac{k\pi}{n+1}\right), \qquad k=1,2,\ldots,n.
    \end{equation}
  \end{lmm}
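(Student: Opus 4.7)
The plan is to diagonalise $A$ by writing the eigenvalue equation $Av=\lambda v$ componentwise as a constant-coefficient linear recurrence and solving it explicitly. Denoting $v=(v_1,\ldots,v_n)^\top$ and adopting the convention $v_0=v_{n+1}=0$, the identity $Av=\lambda v$ is the second-order recurrence
\begin{equation}
\beta\,v_{i+1}+(\alpha-\lambda)\,v_i+\gamma\,v_{i-1}=0,\qquad i=1,2,\ldots,n,
\end{equation}
together with the Dirichlet-type boundary conditions $v_0=v_{n+1}=0$. A number $\lambda$ is an eigenvalue of $A$ precisely when this boundary-value problem admits a nontrivial solution.

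Assuming first that $\beta\gamma\neq 0$, I would introduce the characteristic equation $\beta r^2+(\alpha-\lambda)r+\gamma=0$ and denote its two roots by $r_1,r_2$. Since $r_1r_2=\gamma/\beta$, the roots may be written in polar form as $r_j=\sqrt{\gamma/\beta}\,e^{\pm\im\theta}$, which automatically gives $r_1+r_2=2\sqrt{\gamma/\beta}\cos\theta$, and hence $\lambda=\alpha+\beta(r_1+r_2)=\alpha+2\sqrt{\beta\gamma}\cos\theta$. The general solution of the recurrence, when $r_1\neq r_2$, is $v_i=c_1 r_1^i+c_2 r_2^i$; the condition $v_0=0$ forces $c_2=-c_1$, and the condition $v_{n+1}=0$ becomes $(r_1/r_2)^{n+1}=1$, i.e.\ $e^{2\im(n+1)\theta}=1$. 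This selects the admissible angles $\theta_k=k\pi/(n+1)$ for $k=1,2,\ldots,n$, producing the eigenvalues $\lambda_k=\alpha+2\sqrt{\beta\gamma}\cos(k\pi/(n+1))$ together with the associated eigenvectors $v_{k,i}=(\gamma/\beta)^{i/2}\sin(ik\pi/(n+1))$.

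Finally I would count: the above procedure exhibits $n$ nontrivial solutions of the boundary-value problem, corresponding to $n$ eigenvalue–eigenvector pairs which are easily checked to be linearly independent (the eigenvector matrix is essentially a sine-transform matrix multiplied by a nonsingular diagonal scaling). Since $A$ is an $n\times n$ matrix this accounts for the full spectrum, so the listed $\lambda_k$ are exactly the eigenvalues.

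The delicate step, and the only real obstacle, is the meaning of $\sqrt{\beta\gamma}$ when $\beta\gamma<0$ and the treatment of the degenerate cases $\beta=0$ or $\gamma=0$. In the first case one interprets the square root as a fixed branch in $\mathbb{C}$, so that the eigenvalues are in general complex but the formula is unchanged. In the second case the matrix $A$ is triangular with $\alpha$ on the diagonal, the characteristic roots coincide, and the formula degenerates correctly to $\lambda_k=\alpha$ for all $k$; this boundary case can be treated separately or by taking the limit $\beta\gamma\to 0$ in the general formula, and I would include a brief remark to this effect to complete the proof.
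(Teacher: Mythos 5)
Your proposal is correct and is the standard difference-equation argument: the paper does not prove Lemma~\ref{lmm:trieig} itself but simply cites Smith's book, and the proof given there is essentially the recurrence-with-Dirichlet-boundary-conditions derivation you describe. Your closing remarks on $\beta\gamma\le 0$ and the final counting step (the $n$ values $\lambda_k$ are distinct when $\beta\gamma\neq 0$, so they exhaust the spectrum of an $n\times n$ matrix, which also disposes of the repeated-root case $r_1=r_2$) are exactly what is needed to make the argument complete, and the complex case is genuinely used later in the paper for the edge detect~A filter.
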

  \begin{proof}
    See \cite{smith1985}.
  \end{proof}
  \begin{lmm}\label{lmm:circeig}
    Let $A\in\mathbb{R}^{n\times n}$ be a circulant matrix:
    \begin{equation}
      A = \mqty{
        a_0     & a_1     & a_2    & \cdots & a_{n-1} \\
        a_{n-1} & a_0     & a_1    & \cdots & a_{n-2} \\
        a_{n-2} & a_{n-1} & a_0    & \cdots & a_{n-3} \\
        \vdots  & \vdots  & \vdots & \cdots & \vdots \\
        a_1     & a_2     & a_3    & \cdots & a_0
      }.
    \end{equation}
    Then, the eigenvalues of $A$ are given by
    \begin{equation}
      \lambda_k=\sum^{n-1}_{l=0}a_l\exp\left(-\frac{2kl\pi\im}{n}\right), \qquad k=1,2,\ldots,n.
    \end{equation}
  \end{lmm}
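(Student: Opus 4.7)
The plan is to exploit the fact that every circulant matrix is a polynomial in a single fundamental shift matrix, and then diagonalize that shift matrix directly. Define $P\in\mathbb{R}^{n\times n}$ by $(P)_{ij}=1$ when $j\equiv i+1\pmod{n}$ and $0$ otherwise, so that $P$ is the circulant whose first row is $(0,1,0,\ldots,0)$. A routine entrywise expansion shows
\begin{equation}
A=\sum_{l=0}^{n-1}a_l P^l,
\end{equation}
and so any eigenvector of $P$ with eigenvalue $\mu$ is automatically an eigenvector of $A$ with eigenvalue $\sum_{l=0}^{n-1}a_l\mu^l$. This reduces the lemma to the eigenproblem for the single matrix $P$.

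Next I would diagonalize $P$ explicitly. Setting $\omega_k:=\exp(-2k\pi\im/n)$ and $v_k:=(1,\omega_k,\omega_k^2,\ldots,\omega_k^{n-1})^{\top}$ for $k=1,2,\ldots,n$, the action $(Pv)_i=v_{i+1\bmod n}$ together with $\omega_k^n=1$ gives $Pv_k=\omega_k v_k$. Since the $\omega_k$ are the $n$ distinct $n$-th roots of unity, the Vandermonde-type matrix $[v_1\mid v_2\mid\cdots\mid v_n]$ is nonsingular, so $\{v_k\}$ is an eigenbasis for $P$.

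Combining the two steps yields $Av_k=\bigl(\sum_{l=0}^{n-1}a_l\omega_k^l\bigr)v_k$, and hence
\begin{equation}
\lambda_k=\sum_{l=0}^{n-1}a_l\exp\!\left(-\frac{2kl\pi\im}{n}\right),\qquad k=1,2,\ldots,n,
\end{equation}
as claimed. The only place where care is needed is the bookkeeping on the indexing: one must check that the specific row-cycling pattern of $A$ in the statement matches the chosen orientation of $P$ (a right-shift versus a left-shift, which would flip the sign of the exponent in $\omega_k$). Because this amounts to verifying $A=a_0 I+a_1 P+\cdots+a_{n-1}P^{n-1}$ against the first two rows of $A$, the potential obstacle is really a one-line check rather than a substantive difficulty, and the argument otherwise reduces to the standard DFT-diagonalization of a cyclic shift.
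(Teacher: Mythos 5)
Your argument is correct: writing $A=\sum_{l=0}^{n-1}a_lP^l$ with $P$ the cyclic shift and checking $Pv_k=\omega_k v_k$ for the DFT vectors $v_k=(1,\omega_k,\ldots,\omega_k^{n-1})^{\top}$, $\omega_k=\exp(-2k\pi\im/n)$, is the standard diagonalization proof, and your orientation of $P$ does match the row-cycling pattern $A_{ij}=a_{(j-i)\bmod n}$ of the statement. The paper gives no proof of this lemma at all---it simply cites Gray's review of Toeplitz and circulant matrices, where essentially this same argument appears---so your write-up supplies exactly what the citation defers to.
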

  \begin{proof}
    See \cite{gray2006}.
  \end{proof}
  \begin{lmm}\label{lmm:qteig}
    Let $A\in\mathbb{R}^{n\times n}$ be the following tridiagonal matrix:
    \begin{equation}
      A = 
      \mqty{
        \alpha+\gamma & \beta  &        &        &  \\
        \gamma        & \alpha & \beta  &        &  \\
                      & \gamma & \ddots & \ddots &  \\
                      &        & \ddots & \alpha & \beta \\
                      &        &        & \gamma & \alpha + \beta
      }.
    \end{equation}
    Then, the eigenvalues of $A$ are given by
    \begin{align}
      \begin{aligned}
        \lambda_k=\begin{cases}
          \alpha + 2\sqrt{\beta\gamma}\cos\left(\frac{k\pi}{n}\right), & k=1,2,\ldots,n-1,\\
          \alpha + \beta + \gamma, & k=n.
      \end{cases}
    \end{aligned}
  \end{align}
  \end{lmm}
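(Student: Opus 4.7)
The plan is to combine two ingredients: an explicit eigenvector for the exceptional eigenvalue, and a diagonal similarity that reduces the remaining spectral problem to a familiar three-term recurrence.

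First, a direct computation shows that the all-ones vector $(1,1,\ldots,1)^\top$ is an eigenvector of $A$ with eigenvalue $\alpha+\beta+\gamma$: the $n-2$ interior rows each sum to $\gamma+\alpha+\beta$, and the first and last rows give $(\alpha+\gamma)+\beta$ and $\gamma+(\alpha+\beta)$ respectively. This accounts for $\lambda_n$.

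For the remaining $n-1$ eigenvalues, I would conjugate $A$ by the diagonal matrix $D=\mathrm{diag}(1,r,r^2,\ldots,r^{n-1})$ with $r=\sqrt{\gamma/\beta}$. Because $(D^{-1}AD)_{ij}=r^{j-i}A_{ij}$, a short check shows that $\tilde A:=D^{-1}AD$ is symmetric tridiagonal, with constant off-diagonal entries $\sqrt{\beta\gamma}$ and the same diagonal $(\alpha+\gamma,\alpha,\ldots,\alpha,\alpha+\beta)$ as $A$. The interior three-term recurrence $\sqrt{\beta\gamma}(w_{j-1}+w_{j+1})+\alpha w_j=\lambda w_j$ has general solution $w_j=C_1\cos(j\theta)+C_2\sin(j\theta)$ with $\lambda=\alpha+2\sqrt{\beta\gamma}\cos\theta$, and the modified corners translate (upon extending the recurrence to the ghost indices $j=0$ and $j=n+1$) into the boundary conditions $w_0=rw_1$ and $w_{n+1}=r^{-1}w_n$. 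For each $k=1,\ldots,n-1$, setting $\theta_k=k\pi/n$, I would propose the ansatz
\[
  w^{(k)}_j := \sin(j\theta_k)-r\sin((j-1)\theta_k),
\]
and verify both boundary conditions by routine sine-addition identities together with $\sin(n\theta_k)=0$; the interior recurrence then holds by linearity since $\sin(j\theta_k)$ and $\sin((j-1)\theta_k)$ each solve it. This places $\lambda_k=\alpha+2\sqrt{\beta\gamma}\cos(k\pi/n)$ in the spectrum for $k=1,\ldots,n-1$, and together with $\lambda_n=\alpha+\beta+\gamma$ these exhaust the $n$ eigenvalues of $A$.

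The main obstacle is the non-standard pair of boundary conditions $(w_0=rw_1,\ w_{n+1}=r^{-1}w_n)$ produced by the corner modifications; they rule out the usual single-sine ansatz. The key idea is the specific choice $w_j=\sin(j\theta)-r\sin((j-1)\theta)$, which precisely absorbs the factor $r$ at both ends while preserving the familiar quantization $\sin(n\theta)=0$. Secondary nuisances are (i) the degenerate case $\beta\gamma\le 0$, where $r$ is complex or undefined, which can be handled either by permitting complex $r$ (the claimed formula is analytic in $\alpha,\beta,\gamma$) or by a short limiting argument, and (ii) verifying linear independence of the $n$ produced eigenvectors $\{Dw^{(k)}\}_{k=1}^{n-1}\cup\{(1,\ldots,1)^\top\}$, which follows from distinctness of the eigenvalues away from the trivial degeneracy $\beta=\gamma=0$.
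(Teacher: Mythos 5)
Your argument is essentially correct, but it takes a genuinely different route from the paper. The paper works entirely with the characteristic polynomial: a cofactor expansion of $\det(A-\lambda I_n)$, combined with the three-term recurrence for the Toeplitz characteristic polynomials $\tilde f_k$, yields the clean factorization $\det(A-\lambda I_n)=(\alpha+\beta+\gamma-\lambda)\tilde f_{n-1}(\lambda)$, from which both parts of the spectrum drop out at once, with correct algebraic multiplicities and with no case distinctions on the signs of $\beta$ and $\gamma$. You instead exhibit explicit eigenvectors: the all-ones vector for $\alpha+\beta+\gamma$, and, after the diagonal similarity with $r=\sqrt{\gamma/\beta}$, the ansatz $w^{(k)}_j=\sin(j\theta_k)-r\sin((j-1)\theta_k)$ for the remaining eigenvalues. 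I checked the boundary conditions $w_0=rw_1$ and $w_{n+1}=r^{-1}w_n$ and they do hold for $\theta_k=k\pi/n$, so your construction is sound and has the added benefit of producing the eigenvectors, which the paper's proof does not. The one point to tighten is your final exhaustion step: the $n$ listed values need not be distinct even when $(\beta,\gamma)\ne(0,0)$. Besides $\beta\gamma=0$, the coincidence $\lambda_k=\lambda_n$ occurs exactly when $\gamma=-\beta$ and $n$ is even (take $k=n/2$, where both values equal $\alpha$); this is precisely the configuration of the paper's edge-detect-A matrix $S_n^{(\mathrm{R})}$, and there your two eigenvectors collapse to the same vector (indeed $S_2^{(\mathrm{R})}$ is nilpotent, so no eigenvector count can certify the multiplicity). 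Your fallback --- noting that $\prod_{k=1}^{n}(\lambda-\lambda_k)$ is a polynomial in $(\alpha,\beta,\gamma)$ that agrees with $\det(\lambda I_n-A)$ on the open dense set where the eigenvalues are distinct, hence everywhere --- does repair this, but it should be stated as covering the case $\gamma=-\beta$ with $n$ even as well, not only $\beta\gamma\le 0$ with complex $r$. The paper's determinant identity avoids this issue entirely, which is what that approach buys.
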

  \begin{proof}
    The proof of this Lemma is given in Appendix.
  \end{proof}
    Note that eigenvalues of several tridiagonal matrices, including the matrix in Lemma~\ref{lmm:qteig}, are presented in~\cite{losonczi1992}. 
    However, in Appendix, we prove Lemma~\ref{lmm:qteig} differently from~\cite{losonczi1992}.
    For eigenvalues of more general tridiagonal matrices and their applications, we refer the reader to~\cite{dafonseca2020,losonczi1992}.

  \subsection{Box blur\label{subsec3.1}}
  \subsubsection{Zero boundary condition}
    When $F$ is the box blur filter, Eq.~\eqref{eq:gen-Sylvester} becomes
    \begin{equation}
      T_mXT_n=9B, \label{eq:box}
    \end{equation}
    where $T_n:=L_n + I_n + U_n
    \in\mathbb{R}^{n\times n}$.
    Therefore, the following result holds:
    \begin{cor}\label{cor:box}
      {\bfseries(Zero boundary condition)}
      Let $F\in\mathbb{R}^{3\times 3}$ be the box blur filter~\eqref{f-box}.
      Then, Eq.~\eqref{eq:convolution} with the zero boundary condition has a unique solution for every $B$ if and only if 
      $m,n\notin\{3l-1: l\in\mathbb{N}\}$.
    \end{cor}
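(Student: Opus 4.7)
The plan is to exploit the especially clean factored form~\eqref{eq:box}, which holds because for the box blur all nine filter entries coincide and so $F_1=F_2=F_3=\tfrac19 T_m$, giving $T_mXT_n=9B$. The corollary thus reduces to the statement that this matrix equation is uniquely solvable for every right-hand side if and only if neither of the sizes $m$, $n$ lies in $\{3l-1:l\in\mathbb N\}$.

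First I would pass to the vectorized form. Applying the standard identity $\rmvec(AXC)=(C^\top\otimes A)\rmvec(X)$ to $T_mXT_n=9B$ gives the linear system
\begin{equation}
  (T_n^\top\otimes T_m)\rmvec(X)=9\rmvec(B).
\end{equation}
Since the eigenvalues of a Kronecker product are the pairwise products of eigenvalues of the factors, $T_n^\top\otimes T_m$ is invertible if and only if both $T_m$ and $T_n$ are invertible. Hence unique solvability for every $B$ is equivalent to $T_m$ and $T_n$ both being nonsingular.

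Next I would compute the spectra of $T_m$ and $T_n$. Each $T_n=L_n+I_n+U_n$ is a tridiagonal Toeplitz matrix $\rmtridiag(1,1,1)$, so by Lemma~\ref{lmm:trieig} with $\alpha=\beta=\gamma=1$ its eigenvalues are
\begin{equation}
  \lambda_k(T_n)=1+2\cos\!\left(\frac{k\pi}{n+1}\right),\qquad k=1,2,\ldots,n.
\end{equation}
Thus $T_n$ is singular exactly when $\cos(k\pi/(n+1))=-\tfrac12$ for some admissible $k$, i.e.\ when $k\pi/(n+1)=2\pi/3$, which gives $3k=2(n+1)$.

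Finally I would translate this divisibility condition into the stated arithmetic description. Since $\gcd(2,3)=1$, the equation $3k=2(n+1)$ has a solution $k\in\{1,\ldots,n\}$ if and only if $3\mid n+1$, that is $n=3l-1$ for some $l\in\mathbb N$ (taking $k=2l$, which clearly satisfies $1\le k\le n$). Hence $T_n$ is invertible iff $n\notin\{3l-1:l\in\mathbb N\}$, and the analogous statement holds for $T_m$; combining the two yields the corollary. No step is really an obstacle here—the only subtlety worth stating carefully is the vectorization argument, because the factored shape $T_mXT_n$ is what makes the unique solvability cleanly equivalent to invertibility of each factor rather than requiring a more delicate generalized-Sylvester spectral analysis.
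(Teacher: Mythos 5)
Your proposal is correct and follows essentially the same route as the paper: reduce to $T_mXT_n=9B$, observe that unique solvability for every $B$ is equivalent to nonsingularity of $T_m$ and $T_n$, and apply Lemma~\ref{lmm:trieig} to see that an eigenvalue $1+2\cos(k\pi/(n+1))$ vanishes exactly when $3\mid n+1$. The only difference is that you spell out the Kronecker-product vectorization and the divisibility bookkeeping, which the paper leaves implicit.
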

    \begin{proof}
      From Theorem~\ref{thm:zero} and the box blur filter~\eqref{f-box}, Eq.~\eqref{eq:convolution} with the zero boundary condition is equivalent to~\eqref{eq:box}. 
      Therefore, we only need to show necessary and sufficient conditions for the existence of a unique solution of~\eqref{eq:box}.
      Eq.~\eqref{eq:box} has a unique solution for every $B$ if and only if $T_m$ and $T_n$ are nonsingular.
      From Lemma~\ref{lmm:trieig}, the eigenvalues of $T_n$ are given by
      \begin{equation}
        \lambda_k = 1 + 2\cos\left(\frac{k\pi}{n+1}\right), \qquad k=1,2,\ldots, n.
      \end{equation}
      Thus, $T_m$ and $T_n$ are nonsingular if and only if $m,n\notin\{3l-1: l\in\mathbb{N}\}$, which completes the proof.
    \end{proof}
  \subsubsection{Periodic boundary condition}
    Similarly, when $F$ is the box blur filter, Eq.~\eqref{eq:gen-Sylvester:p} becomes
    \begin{equation}
      T^{(\mathrm{P})}_m X T^{(\mathrm{P})}_n = 9B, \label{eq:boxp}
    \end{equation}
    where $T^{(\mathrm{P})}_n=L^{(\mathrm{P})}_n + I_n + U^{(\mathrm{P})}_n\in\mathbb{R}^{n\times n}$.
    Therefore, we obtain the following result:
    \begin{cor}\label{cor:pbox}
      {\bfseries(Periodic boundary condition)}
      Let $F\in\mathbb{R}^{3\times 3}$ be the box blur filter~\eqref{f-box}.
      Then, Eq.~\eqref{eq:convolution} with the periodic boundary condition has a unique solution for every $B$ if and only if 
      $m,n\notin\{3l: l\in\mathbb{N}\}$.
    \end{cor}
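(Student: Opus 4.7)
The plan is to mirror the proof of Corollary~\ref{cor:box}. Invoking Theorem~\ref{thm:periodic} with $F=F_{\rm BOX}$ reduces the convolution equation to the matrix equation $T^{(\mathrm{P})}_m X T^{(\mathrm{P})}_n = 9B$, which admits a unique solution for every $B$ if and only if $T^{(\mathrm{P})}_m$ and $T^{(\mathrm{P})}_n$ are both nonsingular. So the task reduces to characterizing when $T^{(\mathrm{P})}_n$ is singular.

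Next, I would observe that $T^{(\mathrm{P})}_n = L^{(\mathrm{P})}_n + I_n + U^{(\mathrm{P})}_n$ is a circulant matrix. Reading off the first row from the definitions~\eqref{cshiftmatrix}, the coefficients in the notation of Lemma~\ref{lmm:circeig} are $a_0 = a_1 = a_{n-1} = 1$ and $a_l = 0$ otherwise. Lemma~\ref{lmm:circeig} then gives
\begin{equation}
\lambda_k = 1 + \exp\!\left(-\frac{2k\pi\im}{n}\right) + \exp\!\left(-\frac{2k(n-1)\pi\im}{n}\right) = 1 + 2\cos\!\left(\frac{2k\pi}{n}\right), \qquad k=1,2,\ldots,n,
\end{equation}
after using $\exp(-2k(n-1)\pi\im/n) = \exp(2k\pi\im/n)$.

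Finally, I would analyze when $\lambda_k = 0$, i.e.\ $\cos(2k\pi/n) = -1/2$. This forces $2k\pi/n \equiv \pm 2\pi/3 \pmod{2\pi}$, so $3k \in \{n, 2n\}$ for some $k\in\{1,\ldots,n\}$, which has a solution if and only if $3 \mid n$. Hence $T^{(\mathrm{P})}_n$ is nonsingular exactly when $n \notin \{3l : l\in\mathbb{N}\}$, and the same for $T^{(\mathrm{P})}_m$; combining yields the stated condition.

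The only subtle point is the circulant identification and correctly reading off $a_{n-1}=1$ (not $a_1$ twice) from $L^{(\mathrm{P})}_n$, since the convention in Lemma~\ref{lmm:circeig} places the wrap-around entry at position $a_{n-1}$; everything after that is a direct eigenvalue calculation, so I do not anticipate a genuine obstacle.
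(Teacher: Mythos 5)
Your proposal is correct and follows essentially the same route as the paper: reduce to the nonsingularity of $T^{(\mathrm{P})}_m$ and $T^{(\mathrm{P})}_n$ via Theorem~\ref{thm:periodic}, identify $T^{(\mathrm{P})}_n$ as a circulant with $a_0=a_1=a_{n-1}=1$, and apply Lemma~\ref{lmm:circeig} to get $\lambda_k = 1+2\cos(2k\pi/n)$. Your final step spelling out that $\lambda_k=0$ for some $k$ exactly when $3\mid n$ is just a more explicit version of what the paper leaves implicit.
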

    \begin{proof}
      By an argument similar to Corollary~\ref{cor:box}, it suffices to show necessary and sufficient conditions for $T^{(\mathrm{P})}_m$ and $T^{(\mathrm{P})}_n$ in~\eqref{eq:boxp} to be nonsingular.
      From Lemma~\ref{lmm:circeig}, the eigenvalues of $T_n^{(\mathrm{P})}$ are given by
      \begin{equation}
        \lambda_k = 1 + \exp\left(-\frac{2k\pi\im}{n}\right) + \exp\left(\frac{2k\pi\im}{n}\right)
        = 1 + 2\cos\left(\frac{2k\pi}{n}\right), \qquad k=1,2,\ldots, n.
      \end{equation}
      Thus, $T_m^{(\mathrm{P})}$ and $T_n^{(\mathrm{P})}$ are nonsingular if and only if $m, n\notin\{3l: l\in\mathbb{N}\}$.
    \end{proof}
    \subsubsection{Reflexive boundary condition}
    When $F$ is the box blur filter,~\eqref{eq:gen-Sylvester:r} becomes
    \begin{equation}
      T^{(\mathrm{R})}_m X T^{(\mathrm{R})}_n = 9B, \label{eq:boxr}
    \end{equation}
    where $T^{(\mathrm{R})}_n=L^{(\mathrm{R})}_n + I_n + U^{(\mathrm{R})}_n\in\mathbb{R}^{n\times n}$.
    Therefore, we get the following result:
    \begin{cor}\label{cor:rbox}
      {\bfseries(Reflexive boundary condition)}
      Let $F\in\mathbb{R}^{3\times 3}$ be the box blur filter~\eqref{f-box}.
      Then, Eq.~\eqref{eq:convolution} with the reflexive boundary condition has a unique solution for every $B$ if and only if $m, n\notin \{3l:l\in\mathbb{N}\}$. 
    \end{cor}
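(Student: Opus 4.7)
The plan is to follow the template already established by Corollaries~\ref{cor:box} and~\ref{cor:pbox}. By Theorem~\ref{thm:reflexive} applied to $F=F_{\rm BOX}$, the convolution equation under the reflexive boundary condition is equivalent to~\eqref{eq:boxr}. Since the Kronecker-product form of this matrix equation has coefficient matrix $T_n^{(\mathrm{R})}\otimes T_m^{(\mathrm{R})}$ (up to the usual vectorization), unique solvability for every $B$ is equivalent to the nonsingularity of both $T_m^{(\mathrm{R})}$ and $T_n^{(\mathrm{R})}$.

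The next step is to compute the spectrum of $T_n^{(\mathrm{R})}=L_n^{(\mathrm{R})}+I_n+U_n^{(\mathrm{R})}$. Inspecting the definitions in~\eqref{qshiftmatrix}, $L_n^{(\mathrm{R})}$ contributes $1$ on the subdiagonal together with an extra $1$ in the $(1,1)$ entry, and $U_n^{(\mathrm{R})}$ contributes $1$ on the superdiagonal together with an extra $1$ in the $(n,n)$ entry. Thus $T_n^{(\mathrm{R})}$ has the exact form of the matrix in Lemma~\ref{lmm:qteig} with $\alpha=\beta=\gamma=1$, giving eigenvalues
\begin{equation}
  \lambda_k=1+2\cos\!\left(\frac{k\pi}{n}\right),\quad k=1,\ldots,n-1,\qquad \lambda_n=3.
\end{equation}

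Finally, I would determine when $T_n^{(\mathrm{R})}$ is singular. The eigenvalue $\lambda_n=3$ never vanishes, so the only obstruction is $1+2\cos(k\pi/n)=0$ for some $1\le k\le n-1$, i.e.\ $k\pi/n=2\pi/3$, equivalently $3k=2n$. Such an integer $k$ in the range $[1,n-1]$ exists precisely when $n$ is divisible by $3$ (taking $k=2n/3$). Hence $T_n^{(\mathrm{R})}$ is nonsingular iff $n\notin\{3l:l\in\mathbb{N}\}$, and the same for $T_m^{(\mathrm{R})}$, which yields the claim.

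There is no real obstacle here: once Lemma~\ref{lmm:qteig} is in hand, the argument is a direct transcription of the proofs of Corollaries~\ref{cor:box} and~\ref{cor:pbox}. The only small care needed is to verify that the rank-$1$ corrections in $L_n^{(\mathrm{R})}$ and $U_n^{(\mathrm{R})}$ land precisely at the corner diagonal entries so as to match the pattern of Lemma~\ref{lmm:qteig}, and to handle the exceptional eigenvalue $\lambda_n=3$ separately from the cosine family.
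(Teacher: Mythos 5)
Your proposal is correct and follows essentially the same route as the paper: reduce to the nonsingularity of $T_m^{(\mathrm{R})}$ and $T_n^{(\mathrm{R})}$ in~\eqref{eq:boxr}, apply Lemma~\ref{lmm:qteig} with $\alpha=\beta=\gamma=1$, and observe that $1+2\cos(k\pi/n)$ vanishes for some $k\in\{1,\dots,n-1\}$ exactly when $3$ divides $n$. The extra detail you supply (the structural check that $T_n^{(\mathrm{R})}$ matches the pattern of Lemma~\ref{lmm:qteig}, and the explicit divisibility argument $3k=2n$) is a correct elaboration of steps the paper leaves implicit.
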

    \begin{proof}
      By a similar argument to the above corollaries, we only need to show necessary and sufficient conditions for $T^{(\mathrm{R})}_m$ and $T^{(\mathrm{R})}_n$ in~\eqref{eq:boxr} to be nonsingular.
      From Lemma~\ref{lmm:qteig}, the eigenvalues of $T_n^{(\mathrm{R})}$ are given by
      \begin{equation}
        \lambda_k = 
        \begin{cases}
          1 + 2\cos\left(\frac{k\pi}{n}\right),&\qquad k=1,2,\ldots,n-1,\\
          3, &\qquad k=n.
        \end{cases}
      \end{equation}
      Thus, $T_m^{(\mathrm{R})}$ and $T_n^{(\mathrm{R})}$ are nonsingular if and only if $m, n\notin\{3l: l\in\mathbb{N}\}$.
    \end{proof}

  \subsection{Gaussian blur}
  \subsubsection{Zero boundary condition}
    When $F$ is the Gaussian blur filter, Eq.~\eqref{eq:gen-Sylvester} becomes
    \begin{equation}
      V_mXV_n=16B, \label{eq:gus}
    \end{equation}
    where $V_n:=L_n + 2I_n + U_n
    \in\mathbb{R}^{n\times n}$.
    Therefore we have the following result:
    \begin{cor}\label{cor:gus}
      {\bfseries(Zero boundary condition)}
      Let $F\in\mathbb{R}^{3\times 3}$ be the Gaussian blur filter~\eqref{f-gus}.
      Then, Eq.~\eqref{eq:convolution} with the zero boundary condition has a unique solution for every $B$.
    \end{cor}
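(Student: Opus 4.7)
The plan is to mirror the proof structure used for the box blur in Corollary~\ref{cor:box}. By Theorem~\ref{thm:zero} applied to $F_{\rm GUS}$ in~\eqref{f-gus}, the convolution equation reduces to~\eqref{eq:gus}, so it suffices to show that the tridiagonal Toeplitz matrices $V_m$ and $V_n$ are nonsingular for every $m,n\in\mathbb{N}$; then $X = 16\,V_m^{-1} B V_n^{-1}$ gives the unique solution for any right-hand side $B$.

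To verify nonsingularity, I would apply Lemma~\ref{lmm:trieig} with $\alpha=2$ and $\beta=\gamma=1$. This yields the eigenvalues
\begin{equation}
  \lambda_k = 2 + 2\cos\!\left(\frac{k\pi}{n+1}\right), \qquad k=1,2,\ldots,n,
\end{equation}
of $V_n$. Since $k\pi/(n+1)\in(0,\pi)$ for each admissible $k$, we have $\cos(k\pi/(n+1))>-1$, hence $\lambda_k>0$ for all $k$. Therefore $V_n$ is positive definite and, in particular, invertible; the same argument applies to $V_m$.

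There is essentially no obstacle here: the key feature is that the diagonal entry $2$ strictly dominates the extreme value of $2\cos(\cdot)$ attained in the open interval, so unlike the box blur case (where the diagonal was $1$ and equality with $-2\cos(\cdot)$ could occur) no exceptional sizes need to be excluded. The conclusion then follows immediately from the equivalence provided by Theorem~\ref{thm:zero}.
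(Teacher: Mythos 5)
Your proposal is correct and follows essentially the same route as the paper: reduce to $V_mXV_n=16B$ via Theorem~\ref{thm:zero}, then apply Lemma~\ref{lmm:trieig} to get the eigenvalues $2+2\cos\left(\frac{k\pi}{n+1}\right)>0$, so $V_m$ and $V_n$ are always nonsingular. The only cosmetic difference is that you phrase the conclusion in terms of positive definiteness and write the solution explicitly as $X=16\,V_m^{-1}BV_n^{-1}$, which the paper leaves implicit.
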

    \begin{proof}
      Since~\eqref{eq:gus} has the same form as one in the case of the box blur filter, to complete the proof, we need only prove that $V_m$ and $V_n$ in~\eqref{eq:gus} are always nonsingular.
      From Lemma~\ref{lmm:trieig}, the eigenvalues of $V_n$ are given by
      \begin{equation}
        \lambda_k = 2 + 2\cos\left(\frac{k\pi}{n+1}\right), \qquad k=1,2,\ldots,n.
      \end{equation}
      Thus, it follows that $-1<\cos\left(\frac{k\pi}{n+1}\right)<1$ for $k\in\{1,2,\ldots, n\}$, which implies $0<\lambda_k<4$.
      Hence, $V_m$ and $V_n$ are always nonsingular.
    \end{proof}
  \subsubsection{Periodic boundary condition}
    When $F$ is the Gaussian filter, Eq.~\eqref{eq:gen-Sylvester:p} becomes
    \begin{equation}
      V_m^{\mathrm{(P)}} XV_n^{\mathrm{(P)}} = 16B, \label{eq:gusp}
    \end{equation}
    where $V_n^{\mathrm{(P)}}:=L_n^{\mathrm{(P)}} + 2I_n + U_n^{\mathrm{(P)}}\in\mathbb{R}^{n\times n}$.
    Therefore, the following result holds:
    \begin{cor}
      {\bfseries(Periodic boundary condition)}
      Let $F\in\mathbb{R}^{3\times 3}$ be the Gaussian blur filter~\eqref{f-gus}.
      Then, Eq.~\eqref{eq:convolution} with the periodic boundary condition has a unique solution for every $B$ if and only if $m,n\notin\{2l:l\in\mathbb{N}\}$. 
    \end{cor}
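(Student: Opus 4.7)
The plan is to follow verbatim the template of Corollary~\ref{cor:pbox}. Since Theorem~\ref{thm:periodic} specialised to $F=F_{\mathrm{GUS}}$ yields the matrix equation~\eqref{eq:gusp} of the form $V_m^{(\mathrm{P})} X V_n^{(\mathrm{P})} = 16 B$, the equation admits a unique solution for every right-hand side $B$ if and only if both $V_m^{(\mathrm{P})}$ and $V_n^{(\mathrm{P})}$ are nonsingular. Hence the task reduces to characterising exactly the pairs $(m,n)$ for which these two matrices have no zero eigenvalue.

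Next I would observe that $V_n^{(\mathrm{P})}=L_n^{(\mathrm{P})}+2I_n+U_n^{(\mathrm{P})}$ is a circulant matrix whose first row is $(2,1,0,\ldots,0,1)$, so Lemma~\ref{lmm:circeig} applies with $a_0=2$, $a_1=1$, $a_{n-1}=1$, and all other $a_l=0$. Summing the two complex exponentials and using $\exp(-2k(n-1)\pi\im/n)=\exp(2k\pi\im/n)$, the eigenvalues collapse to
\begin{equation}
    \lambda_k = 2 + \exp\!\left(-\frac{2k\pi\im}{n}\right) + \exp\!\left(\frac{2k\pi\im}{n}\right) = 2 + 2\cos\!\left(\frac{2k\pi}{n}\right), \qquad k=1,2,\ldots,n,
\end{equation}
which can be further written as $4\cos^2(k\pi/n)$ via the half-angle identity.

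Finally I would determine when $\lambda_k$ vanishes: $\cos(k\pi/n)=0$ forces $k/n=1/2+l$ for some integer $l\ge 0$, and the only value of $k\in\{1,\ldots,n\}$ satisfying this is $k=n/2$, which is an integer precisely when $n$ is even. Thus $V_n^{(\mathrm{P})}$ is singular if and only if $n\in\{2l:l\in\mathbb{N}\}$, and symmetrically for $V_m^{(\mathrm{P})}$. Combining both conditions gives the stated equivalence. There is no real obstacle here; the argument is entirely parallel to Corollary~\ref{cor:pbox}, and the only point requiring a little care is the simplification of the complex-exponential sum to a real cosine and the subsequent identification of its zeros.
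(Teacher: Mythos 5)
Your proposal is correct and follows essentially the same route as the paper: reduce to the nonsingularity of $V_m^{(\mathrm{P})}$ and $V_n^{(\mathrm{P})}$ via the form $V_m^{(\mathrm{P})}XV_n^{(\mathrm{P})}=16B$, compute the eigenvalues $2+2\cos(2k\pi/n)$ with Lemma~\ref{lmm:circeig}, and identify the zero eigenvalue exactly when $n$ is even. The only difference is that you spell out the final zero-location step (via $4\cos^2(k\pi/n)$) more explicitly than the paper does.
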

    \begin{proof}
      By a similar argument to the corollaries in Subsection~\ref{subsec3.1}, it is sufficient to provide necessary and sufficient conditions for $V_m^{\mathrm{(P)}}$ and $V_n^{\mathrm{(P)}}$ in~\eqref{eq:gusp} to be nonsingular.
      From Lemma~\ref{lmm:circeig}, the eigenvalues of $V_n^{(\mathrm{P})}$ are given by
      \begin{equation}
        \lambda_k = 2 + \exp\left(-\frac{2kl\pi\im}{n}\right) + \exp\left(\frac{2kl\pi\im}{n}\right)= 2 + 2\cos\left(\frac{2k\pi}{n}\right), \qquad k=1,2,\ldots,n.
      \end{equation}
      Thus, 
      $V_m^{(\mathrm{P})}$ and $V_n^{(\mathrm{P})}$ are nonsingular if and only if $m,n\notin\{2l:l\in\mathbb{N}\}$.
      Hence we complete the proof.
    \end{proof}
  \subsubsection{Reflexive boundary condition}
    When $F$ is the Gaussian filter, Eq.~\eqref{eq:gen-Sylvester:r} becomes
    \begin{equation}
      V_m^{\mathrm{(R)}} XV_n^{\mathrm{(R)}} = 16B, \label{eq:gusr}
    \end{equation}
    where $V_n^{\mathrm{(R)}}:=L_n^{\mathrm{(R)}} + 2I_n + U_n^{\mathrm{(R)}}\in\mathbb{R}^{n\times n}$.
    Therefore, the following result holds:
    \begin{cor}
      {\bfseries(Reflexive boundary condition)}
      Let $F\in\mathbb{R}^{3\times 3}$ be the Gaussian blur filter~\eqref{f-gus}.
      Then, Eq.~\eqref{eq:convolution} with the reflexive boundary condition has a unique solution for every $B$.
    \end{cor}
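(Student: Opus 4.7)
The plan is to follow exactly the template established by the preceding corollaries in this subsection and in Subsection~\ref{subsec3.1}. By Theorem~\ref{thm:reflexive} applied to the Gaussian filter~\eqref{f-gus}, the convolution equation under the reflexive boundary condition is equivalent to the matrix equation~\eqref{eq:gusr}, namely $V_m^{(\mathrm{R})} X V_n^{(\mathrm{R})} = 16 B$. Consequently, unique solvability for every $B$ is equivalent to the simultaneous nonsingularity of $V_m^{(\mathrm{R})}$ and $V_n^{(\mathrm{R})}$, so the task reduces to verifying that these two tridiagonal matrices are invertible regardless of the size.

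To check nonsingularity, I would apply Lemma~\ref{lmm:qteig} with $\alpha=2$, $\beta=1$, and $\gamma=1$, since $V_n^{(\mathrm{R})} = L_n^{(\mathrm{R})} + 2I_n + U_n^{(\mathrm{R})}$ has the tridiagonal form assumed in that lemma, with the top-left corner entry equal to $\alpha+\gamma=3$ and the bottom-right corner entry equal to $\alpha+\beta=3$. The lemma then yields the eigenvalues
\begin{equation}
\lambda_k = 2 + 2\cos\!\left(\frac{k\pi}{n}\right), \qquad k=1,2,\ldots,n-1, \qquad \lambda_n = 4.
\end{equation}

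The key observation is that for $k\in\{1,2,\ldots,n-1\}$ we have $0 < k\pi/n < \pi$, so $-1 < \cos(k\pi/n) < 1$, which gives $0 < \lambda_k < 4$. Combined with $\lambda_n = 4 > 0$, this shows that every eigenvalue of $V_n^{(\mathrm{R})}$ is strictly positive, hence $V_n^{(\mathrm{R})}$ (and similarly $V_m^{(\mathrm{R})}$) is nonsingular for arbitrary $n$ (resp. $m$). Therefore~\eqref{eq:gusr} admits a unique solution $X$ for every $B$, and the corollary follows. There is no genuine obstacle: the only mildly nontrivial point is noticing that the modified corner entries of $V_n^{(\mathrm{R})}$ match the form required by Lemma~\ref{lmm:qteig}, after which the conclusion is immediate from the strict positivity of the eigenvalues.
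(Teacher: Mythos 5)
Your proposal is correct and follows essentially the same route as the paper: reduce to the nonsingularity of $V_m^{(\mathrm{R})}$ and $V_n^{(\mathrm{R})}$, apply Lemma~\ref{lmm:qteig} with $\alpha=2$, $\beta=\gamma=1$, and conclude from the strict positivity of all eigenvalues. The only difference is that you explicitly verify the corner entries match the lemma's form, which the paper leaves implicit.
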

    \begin{proof}
      It suffices to demonstrate that $V_m^{\mathrm{(R)}}$ and $V_n^{\mathrm{(R)}}$ in~\eqref{eq:gusr} are nonsingular, as in Corollary~\ref{cor:gus}.
      From Lemma~\ref{lmm:qteig}, the eigenvalues of $V_n^{(\mathrm{R})}$ are given by
      \begin{equation}
        \lambda_k = 
        \begin{cases}
          2 + 2\cos\left(\frac{k\pi}{n}\right), &\qquad k=1,2,\ldots,n-1,\\
          4, &\qquad k=n.
        \end{cases}
      \end{equation}
      Thus, it follows that $-1<\cos\left(\frac{k\pi}{n}\right)<1$ for $k\in\{1,2,\ldots, n-1\}$, which implies $\lambda_k>0$.
      Hence, $V_m^{(\mathrm{P})}$ and $V_n^{(\mathrm{P})}$ are always nonsingular.
    \end{proof}

    \subsection{Edge detect A}
    \subsubsection{Zero boundary condition}
    When $F$ is the edge detect A filter, Eq.~\eqref{eq:gen-Sylvester} becomes
    \begin{equation}
      S_mXS_n^\top=B, \label{eq:eda}
    \end{equation}
    where $S_n:=U_n - L_n\in\mathbb{R}^{n\times n}$.
    Therefore, the following result holds:
    \begin{cor}
      {\bfseries(Zero boundary condition)}
      Let $F\in\mathbb{R}^{3\times 3}$ be the edge detect A filter~\eqref{f-eda}.
      Then, Eq.~\eqref{eq:convolution} with the zero boundary condition has a unique solution for every $B$ if and only if $m,n\notin\{2l-1:l\in\mathbb{N}\}$. 
    \end{cor}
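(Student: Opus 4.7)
The plan is to follow the template used in Corollary~\ref{cor:box}: by the reduction~\eqref{eq:eda} supplied just before the statement, the convolution equation with the edge detect A filter and the zero boundary condition is equivalent to the matrix equation $S_m X S_n^\top = B$. Since this is a simple factored equation $AXC = B$ with $A = S_m$ and $C = S_n^\top$, it is uniquely solvable for every right-hand side $B$ if and only if both $S_m$ and $S_n^\top$ are nonsingular; and because $S_n^\top = -S_n$, this is the same as asking that $S_m$ and $S_n$ both be nonsingular. Thus the whole corollary reduces to characterizing the sizes $n$ for which $S_n = U_n - L_n$ is invertible.

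For the eigenvalue computation, I would observe that $S_n = \rmtridiag(1, 0, -1)$ satisfies the hypothesis of Lemma~\ref{lmm:trieig} with $\alpha = 0$, $\beta = 1$, $\gamma = -1$. Plugging in directly yields
\begin{equation}
  \lambda_k = 2\im\cos\!\left(\frac{k\pi}{n+1}\right),\qquad k = 1, 2, \ldots, n,
\end{equation}
which is consistent with $S_n$ being real skew-symmetric (purely imaginary spectrum).

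Finally, I would check when none of the $\lambda_k$ vanishes. Since $k\pi/(n+1) \in (0, \pi)$ for $k \in \{1, \ldots, n\}$, we have $\cos(k\pi/(n+1)) = 0$ iff $k/(n+1) = 1/2$, i.e.\ $n + 1 = 2k$. Such a $k \in \{1, \ldots, n\}$ exists precisely when $n$ is odd, so $S_n$ is nonsingular iff $n$ is even, equivalently $n \notin \{2l - 1 : l \in \mathbb{N}\}$. Imposing this for both $m$ and $n$ finishes the proof. The only mildly subtle point I would want to verify is the appearance of $\sqrt{\beta\gamma}$ when $\beta\gamma < 0$: one has to read Lemma~\ref{lmm:trieig} as a complex identity and take $\sqrt{-1} = \im$, but this causes no trouble because we only care about whether the eigenvalue is zero. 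Beyond that, the argument is completely parallel to Corollaries~\ref{cor:box} and~\ref{cor:gus}.
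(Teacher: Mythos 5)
Your proposal is correct and follows essentially the same route as the paper: reduce to the factored equation $S_mXS_n^\top=B$, invoke Lemma~\ref{lmm:trieig} with $\alpha=0$, $\beta=1$, $\gamma=-1$ to get the purely imaginary spectrum $2\im\cos\bigl(k\pi/(n+1)\bigr)$, and observe that a zero eigenvalue occurs exactly when $n$ is odd. The paper's proof is just a terser version of this; your extra remarks on $S_n^\top=-S_n$ and on interpreting $\sqrt{\beta\gamma}$ for $\beta\gamma<0$ are sound and fill in details the paper leaves implicit.
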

    \begin{proof}
      The proof is completed by showing that $S_m$ and $S_n$ in~\eqref{eq:eda} are nonsingular if and only if $m,n\notin\{2l-1:l\in\mathbb{N}\}$.
      From Lemma~\ref{lmm:trieig}, the eigenvalues of $S_n$ are given by
      \begin{equation}
        \lambda_k = 2\im\cos\left(\frac{k\pi}{n+1}\right), \qquad k=1,2,\ldots,n.
      \end{equation}
      Thus, $S_m$ and $S_n$ are nonsingular if and only if $m,n\notin\{2l-1:l\in\mathbb{N}\}$.
    \end{proof}
    \subsubsection{Periodic boundary condition}
    When $F$ is the edge detect A filter, Eq.~\eqref{eq:gen-Sylvester:p} becomes
    \begin{equation}
      S_m^{(\mathrm{P})}X{S_n^{(\mathrm{P})}}^\top=B, \label{eq:edap}
    \end{equation}
    where $S_n^{(\mathrm{P})}:=U_n^{(\mathrm{P})} - L_n^{(\mathrm{P})}\in\mathbb{R}^{n\times n}$.
    Therefore, the following result holds:
    \begin{cor}
      {\bfseries(Periodic boundary condition)}
      Let $F\in\mathbb{R}^{3\times 3}$ be the edge detect A filter~\eqref{f-eda}.
      Then, Eq.~\eqref{eq:convolution} with the periodic boundary condition does not have a unique solution for every $B$.
    \end{cor}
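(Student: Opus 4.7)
The plan is to mirror the strategy used in the preceding periodic-boundary corollaries: via the Kronecker-product reformulation of the matrix equation $S_m^{(\mathrm{P})} X {S_n^{(\mathrm{P})}}^\top = B$ in~\eqref{eq:edap}, unique solvability for every $B$ is equivalent to both $S_m^{(\mathrm{P})}$ and $S_n^{(\mathrm{P})}$ being nonsingular. It therefore suffices to exhibit an obstruction to the invertibility of $S_n^{(\mathrm{P})}$ that holds for every $n\in\mathbb{N}$, and then to conclude that the associated linear map is not bijective for any choice of $m$ and $n$.

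To produce such an obstruction, I would first note that $S_n^{(\mathrm{P})} = U_n^{(\mathrm{P})} - L_n^{(\mathrm{P})}$ is a circulant matrix whose first row is $(0, 1, 0, \ldots, 0, -1)$, i.e. in the notation of Lemma~\ref{lmm:circeig} only the coefficients $a_1 = 1$ and $a_{n-1} = -1$ are nonzero. Lemma~\ref{lmm:circeig} then applies directly and, after the standard simplification $\exp(-2k(n-1)\pi\im/n) = \exp(2k\pi\im/n)$, yields eigenvalues of the form $\lambda_k = -2\im\sin(2k\pi/n)$ for $k = 1, \ldots, n$. The key observation is that the index $k = n$ always produces $\lambda_n = 0$, independently of $n$; equivalently (and perhaps more transparently), the all-ones vector $(1, 1, \ldots, 1)^\top$ lies in the kernel of $S_n^{(\mathrm{P})}$ because each row of $U_n^{(\mathrm{P})}$ and each row of $L_n^{(\mathrm{P})}$ contains exactly one entry equal to one.

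From this unconditional singularity of $S_n^{(\mathrm{P})}$ it follows immediately that the linear map $X \mapsto S_m^{(\mathrm{P})} X {S_n^{(\mathrm{P})}}^\top$ is never injective on $\mathbb{R}^{m \times n}$, and hence cannot be surjective either, which yields the claimed nonexistence of a unique solution for every $B$. I do not foresee any real obstacle here: the argument is essentially identical to that of Corollary~\ref{cor:pbox} and its Gaussian counterpart, the only new feature being that the singularity is unconditional in $m$ and $n$ rather than governed by a divisibility condition.
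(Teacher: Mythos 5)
Your proposal is correct and follows essentially the same route as the paper: reduce to the singularity of $S_n^{(\mathrm{P})}$, apply Lemma~\ref{lmm:circeig} to get $\lambda_k=-2\im\sin(2k\pi/n)$, and observe that $\lambda_n=0$ for every $n$. Your additional remark that the all-ones vector lies in the kernel of $S_n^{(\mathrm{P})}$ is a nice elementary confirmation, but it does not change the argument in any substantive way.
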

    \begin{proof}
      To complete the proof, it is sufficient to prove that $S_m^{(\mathrm{P})}$ or $S_m^{(\mathrm{P})}$ in~\eqref{eq:edap} is singular.
      From Lemma~\ref{lmm:circeig}, the eigenvalues of $S_n^{(\mathrm{P})}$ are given by
      \begin{equation}
        \lambda_k = \exp\left(-\frac{2k\pi\im}{n}\right) - \exp\left(\frac{2k\pi\im}{n}\right)
        = -2\im\sin\left(\frac{2k\pi}{n}\right), \qquad k=1,2,\ldots,n,
      \end{equation}
      which implies $\lambda_n=0$ for any $n\in\mathbb{N}$.
      Hence, $S_m^{(\mathrm{P})}$ and $S_n^{(\mathrm{P})}$ are always singular.
    \end{proof}
  \subsubsection{Reflexive boundary condition}
    When $F$ is the edge detect A filter, Eq.~\eqref{eq:gen-Sylvester:r} becomes
    \begin{equation}
      S_m^{(\mathrm{R})}X{S_n^{(\mathrm{R})}}^\top=B, \label{eq:edar}
    \end{equation}
    where $S_n^{(\mathrm{R})}:=U_n^{(\mathrm{R})} - L_n^{(\mathrm{R})}\in\mathbb{R}^{n\times n}$.
    Therefore, the following result holds:
    \begin{cor}
      {\bfseries(Reflexive boundary condition)}
      Let $F\in\mathbb{R}^{3\times 3}$ be the edge detect A filter~\eqref{f-eda}.
      Then, Eq.~\eqref{eq:convolution} with the reflexive boundary condition does not have a unique solution for every $B$.
    \end{cor}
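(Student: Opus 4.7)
The plan is to mirror the previous reflexive-case corollaries: by Theorem~\ref{thm:reflexive} applied to $F_{\rm EDA}$, the convolution equation reduces to the matrix equation~\eqref{eq:edar}, namely $S_m^{(\mathrm{R})} X {S_n^{(\mathrm{R})}}^\top = B$ with $S_n^{(\mathrm{R})} = U_n^{(\mathrm{R})} - L_n^{(\mathrm{R})}$. This equation has a unique solution for every $B$ if and only if both $S_m^{(\mathrm{R})}$ and $S_n^{(\mathrm{R})}$ are nonsingular, so to establish the (non)existence statement it is enough to exhibit a zero eigenvalue of $S_n^{(\mathrm{R})}$ for every $n$.

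Next I would observe that $S_n^{(\mathrm{R})}$ has exactly the tridiagonal structure of Lemma~\ref{lmm:qteig} with parameters $\alpha=0$, $\beta=1$, $\gamma=-1$: its $(1,1)$ entry equals $-1 = \alpha+\gamma$, its $(n,n)$ entry equals $1 = \alpha+\beta$, its superdiagonal is $1=\beta$, and its subdiagonal is $-1=\gamma$. Applying the lemma then gives the eigenvalues explicitly as
\begin{equation}
\lambda_k =
\begin{cases}
2\im\cos\!\left(\frac{k\pi}{n}\right), & k=1,2,\ldots,n-1,\\
0, & k=n,
\end{cases}
\end{equation}
since $\alpha+\beta+\gamma = 0$.

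Because $\lambda_n=0$ regardless of $n$, the matrix $S_n^{(\mathrm{R})}$ is singular for every $n \in \mathbb{N}$, and likewise $S_m^{(\mathrm{R})}$ is singular for every $m$. Consequently~\eqref{eq:edar}, and hence the convolution equation with the reflexive boundary condition, fails to admit a unique solution for every $B$. There is no real obstacle here; the only thing to be careful about is matching the signs in $S_n^{(\mathrm{R})} = U_n^{(\mathrm{R})} - L_n^{(\mathrm{R})}$ to the roles of $\beta,\gamma$ in Lemma~\ref{lmm:qteig} so that the corner entries $\alpha+\gamma$ and $\alpha+\beta$ line up correctly. The crucial structural reason the edge-detect filter behaves this way is that its row sums vanish, which manifests exactly as the vanishing combination $\alpha+\beta+\gamma$ giving the bad eigenvalue $\lambda_n$.
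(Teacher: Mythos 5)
Your argument is correct and follows exactly the paper's route: reduce via Theorem~\ref{thm:reflexive} to $S_m^{(\mathrm{R})}X{S_n^{(\mathrm{R})}}^\top=B$ and invoke Lemma~\ref{lmm:qteig} to see that $\alpha+\beta+\gamma=0$ is always an eigenvalue of $S_n^{(\mathrm{R})}$, hence the matrix is singular for every $n$. You merely spell out the parameter matching $\alpha=0$, $\beta=1$, $\gamma=-1$ that the paper leaves implicit, which is fine.
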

    \begin{proof}
      To complete the proof, it is sufficient to prove that $S_m^{(\mathrm{R})}$ or $S_m^{(\mathrm{R})}$ in~\eqref{eq:edar} is singular.
      From Lemma~\ref{lmm:qteig}, 
      zero is an eigenvalue of $S_n^{(\mathrm{R})}$ for any $n\in\mathbb{N}$.
      Hence, $S_m^{(\mathrm{R})}$ and $S_n^{(\mathrm{R})}$ are always singular.
    \end{proof}

    \subsection{Edge detect B}
    \subsubsection{Zero boundary condition}
    When $F$ is the edge detect B filter, Eq.~\eqref{eq:gen-Sylvester} becomes the following Lyapunov equation
    \begin{equation}
      C_mX + XC_n = B, \label{eq:edb}
    \end{equation}
    where $C_n:= 2I_n - U_n - L_n
    \in\mathbb{R}^{n\times n}$.
    Therefore, we have the following result:
    \begin{cor}\label{cor:edb}
      {\bfseries(Zero boundary condition)}
      Let $F\in\mathbb{R}^{3\times 3}$ be the edge detect B filter~\eqref{f-edb}.
      Then, Eq.~\eqref{eq:convolution} with the zero boundary condition has a unique solution for every $B$.
    \end{cor}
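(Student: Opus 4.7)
The plan is to recognize equation~\eqref{eq:edb} as a standard Lyapunov (Sylvester) equation and invoke the classical solvability criterion: the equation $C_m X + X C_n = B$ admits a unique solution $X$ for every right-hand side $B$ if and only if the matrices $C_m$ and $-C_n$ share no eigenvalue, equivalently $\lambda_i(C_m) + \lambda_j(C_n) \neq 0$ for every pair $(i,j)$. Thus the entire task reduces to locating the spectra of $C_m$ and $C_n$.

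First I would observe that $C_n = 2 I_n - U_n - L_n = \rmtridiag(-1, 2, -1)$, which is a tridiagonal Toeplitz matrix of precisely the form covered by Lemma~\ref{lmm:trieig} with $\alpha = 2$, $\beta = \gamma = -1$. Applying that lemma gives
\begin{equation}
\lambda_k(C_n) = 2 + 2\sqrt{(-1)(-1)}\cos\!\left(\frac{k\pi}{n+1}\right) = 2 + 2\cos\!\left(\frac{k\pi}{n+1}\right), \qquad k = 1, 2, \ldots, n,
\end{equation}
and the analogous expression for $C_m$.

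Next I would note that for $k \in \{1, 2, \ldots, n\}$ the argument $k\pi/(n+1)$ lies strictly inside $(0, \pi)$, so $\cos(k\pi/(n+1)) > -1$ and consequently $\lambda_k(C_n) > 0$. The same reasoning gives $\lambda_i(C_m) > 0$ for every $i$. Therefore
\begin{equation}
\lambda_i(C_m) + \lambda_j(C_n) > 0 \qquad \text{for all } i \in \{1,\ldots,m\}, \ j \in \{1,\ldots,n\},
\end{equation}
so the Lyapunov criterion is satisfied and~\eqref{eq:edb} has a unique solution for every $B$. By Theorem~\ref{thm:zero}, this is equivalent to unique solvability of $F*X=B$ with the edge detect B filter under the zero boundary condition.

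There is no genuine obstacle here: once equation~\eqref{eq:edb} is cast in Lyapunov form, the only substantive ingredients are the standard spectral condition for Lyapunov equations and the explicit eigenvalue formula from Lemma~\ref{lmm:trieig}, both of which are available. The one point requiring a brief justification, rather than a calculation, is the strict inequality $\cos(k\pi/(n+1)) > -1$ for the admissible index range, which excludes the only value of the cosine that could make an eigenvalue vanish.
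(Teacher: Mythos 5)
Your proposal is correct and follows essentially the same route as the paper: cast~\eqref{eq:edb} as a Lyapunov equation, invoke the no-common-eigenvalue criterion for $C_m$ and $-C_n$, and use Lemma~\ref{lmm:trieig} to see that all eigenvalues $2+2\cos(k\pi/(n+1))$ are strictly positive. No substantive differences.
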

    \begin{proof}
      To complete the proof, it suffices to prove Eq.~\eqref{eq:edb} has a unique solution for every $B$.
      The Lyapunov equation~\eqref{eq:edb} has a unique solution if and only if $C_m$ and $-C_n$ have no common eigenvalue.
      From Lemma~\ref{lmm:trieig}, the eigenvalues of $C_n$ are given by
      \begin{equation}
        \lambda_k = 2 + 2\cos\left(\frac{k\pi}{n+1}\right), \qquad k=1,2,\ldots,n,
      \end{equation}
      which implies that all eigenvalues of $C_m$ and $C_n$ are positive.
      Hence,~$C_m$ and $-C_n$ have no common eigenvalue, which completes the proof.
    \end{proof}
  \subsubsection{Periodic boundary condition}
    When $F$ is the edge detect B filter, Eq.~\eqref{eq:gen-Sylvester:p} becomes
    \begin{equation}
      C_m^{(\mathrm{P})}X + XC_n^{(\mathrm{P})}=B, \label{eq:edbp}
    \end{equation}
    where $C_n^{(\mathrm{P})}:=2I_n - U_n^{(\mathrm{P})} - L_n^{(\mathrm{P})}\in\mathbb{R}^{n\times n}$.
    Therefore, the following result holds:
    \begin{cor}\label{cor:edbp}
      {\bfseries(Periodic boundary condition)}
      Let $F\in\mathbb{R}^{3\times 3}$ be the edge detect B filter~\eqref{f-edb}.
      Then, Eq.~\eqref{eq:convolution} with the periodic boundary condition does not have a unique solution for every $B$.
    \end{cor}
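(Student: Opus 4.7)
My plan is to follow exactly the template established by Corollary~\ref{cor:edb} and the other periodic-boundary corollaries, reducing the question to an eigenvalue computation for a circulant matrix. First, I would invoke Theorem~\ref{thm:periodic} together with the specific form of $F_{\rm EDB}$ to justify the reduction $C_m^{(\mathrm{P})}X + XC_n^{(\mathrm{P})} = B$, noting that the off-diagonal entries $f_{11}=f_{13}=f_{31}=f_{33}=0$ make each $\tilde F_i$ collapse to the stated tridiagonal circulant (so in particular no ``corner'' term $f_{3i}$ or $f_{1i}$ survives).

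Next, I would recall the standard criterion for a Lyapunov equation: $C_m^{(\mathrm{P})}X + XC_n^{(\mathrm{P})} = B$ admits a unique solution for every $B$ if and only if $C_m^{(\mathrm{P})}$ and $-C_n^{(\mathrm{P})}$ have no common eigenvalue. Because $C_n^{(\mathrm{P})} = 2I_n - U_n^{(\mathrm{P})} - L_n^{(\mathrm{P})}$ is circulant, Lemma~\ref{lmm:circeig} applies and gives
\begin{equation}
\lambda_k = 2 - \exp\!\left(-\frac{2k\pi\im}{n}\right) - \exp\!\left(\frac{2k\pi\im}{n}\right) = 2 - 2\cos\!\left(\frac{2k\pi}{n}\right), \qquad k=1,2,\ldots,n.
\end{equation}
Taking $k=n$ yields $\lambda_n = 0$ for every $n\in\mathbb{N}$, and the same holds for $C_m^{(\mathrm{P})}$. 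Consequently $0$ is an eigenvalue of both $C_m^{(\mathrm{P})}$ and $-C_n^{(\mathrm{P})}$, so the uniqueness criterion fails and the claim follows.

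There is essentially no obstacle here: all the structural work has already been done in Theorem~\ref{thm:periodic}, and the only new ingredient is the trivial observation that the circulant shift sum $U_n^{(\mathrm{P})}+L_n^{(\mathrm{P})}$ always has $2$ as an eigenvalue (corresponding to the constant eigenvector), so $2I_n - U_n^{(\mathrm{P})} - L_n^{(\mathrm{P})}$ always has $0$ in its spectrum. The only minor point to state carefully is that failure of the eigenvalue-separation condition implies both non-uniqueness (the homogeneous equation has nontrivial solutions) and non-solvability for some $B$, so ``does not have a unique solution for every $B$'' is indeed the correct conclusion.
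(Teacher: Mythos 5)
Your proposal is correct and follows essentially the same route as the paper: reduce to the Lyapunov equation \eqref{eq:edbp} via Theorem~\ref{thm:periodic}, apply Lemma~\ref{lmm:circeig} to get $\lambda_k = 2 - 2\cos(2k\pi/n)$, and observe that $\lambda_n = 0$ gives $C_m^{(\mathrm{P})}$ and $-C_n^{(\mathrm{P})}$ the common eigenvalue $0$. The only additions (the constant-eigenvector remark and the comment on what failure of the separation condition entails) are harmless elaborations of the same argument.
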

    \begin{proof}
      It is sufficient to show that Eq.~\eqref{eq:edbp} does not have a unique solution for every $B$, i.e., $C_m^{(\mathrm{P})}$ and $-C_n^{(\mathrm{P})}$ always have common eigenvalues.
      From Lemma~\ref{lmm:circeig}, the eigenvalues of $C_n^{(\mathrm{P})}$ are given by
      \begin{equation}
        \lambda_k = 2 - \exp\left(-\frac{2k\pi\im}{n}\right) - \exp\left(\frac{2k\pi\im}{n}\right)
        = 2 - 2\cos\left(\frac{2k\pi}{n}\right), \qquad k=1,2,\ldots,n,
      \end{equation}
      which implies $\lambda_n=0$ for any $n\in\mathbb{N}$.
      Hence, $C_m^{(\mathrm{P})}$ and $-C_n^{(\mathrm{P})}$ always have the common eigenvalue $0$. 
    \end{proof}
  \subsubsection{Reflexive boundary condition}
    When $F$ is the edge detect B filter, Eq.~\eqref{eq:gen-Sylvester:r} becomes
    \begin{equation}
      C_m^{(\mathrm{R})}X + XC_n^{(\mathrm{R})}=B, \label{eq:edbr}
    \end{equation}
    where $C_n^{(\mathrm{R})}:=2I_n - U_n^{(\mathrm{R})} - L_n^{(\mathrm{R})}\in\mathbb{R}^{n\times n}$.
    Therefore, the following result holds:
    \begin{cor}
      {\bfseries(Reflexive boundary condition)}
      Let $F\in\mathbb{R}^{3\times 3}$ be the edge detect B filter~\eqref{f-edb}.
      Then, Eq.~\eqref{eq:convolution} with the reflexive boundary condition does not have a unique solution for every $B$.
    \end{cor}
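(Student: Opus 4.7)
The plan is to proceed in direct parallel to Corollaries~\ref{cor:edb} and~\ref{cor:edbp}: first reduce~\eqref{eq:gen-Sylvester:r} with $F=F_{\rm EDB}$ to a Lyapunov equation, then invoke the standard spectral criterion, and finally use Lemma~\ref{lmm:qteig} to exhibit a shared eigenvalue.

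Substituting the entries of $F_{\rm EDB}$ into the matrices $\hat{F}_i$ of Theorem~\ref{thm:reflexive} yields $\hat{F}_1 = \hat{F}_3 = -I_m$ and $\hat{F}_2 = 2I_m + C_m^{(\mathrm{R})}$, where
\[
  C_n^{(\mathrm{R})} := 2I_n - U_n^{(\mathrm{R})} - L_n^{(\mathrm{R})}.
\]
A direct inspection of~\eqref{qshiftmatrix} shows that ${U_n^{(\mathrm{R})}}^\top + {L_n^{(\mathrm{R})}}^\top$ equals $U_n^{(\mathrm{R})} + L_n^{(\mathrm{R})}$: both matrices carry $1$s on the super- and sub-diagonals together with $1$s at the $(1,1)$ and $(n,n)$ corners. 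Plugging these identities into~\eqref{eq:gen-Sylvester:r} collapses it to the Lyapunov equation
\[
  C_m^{(\mathrm{R})} X + X C_n^{(\mathrm{R})} = B.
\]

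This Lyapunov equation admits a unique solution for every $B$ if and only if $C_m^{(\mathrm{R})}$ and $-C_n^{(\mathrm{R})}$ share no eigenvalue. To falsify this, I apply Lemma~\ref{lmm:qteig} to $C_n^{(\mathrm{R})}$ with $\alpha = 2$ and $\beta = \gamma = -1$; the corner-entry conditions $\alpha + \gamma = 1$ and $\alpha + \beta = 1$ are satisfied, and the $k=n$ eigenvalue is $\alpha + \beta + \gamma = 0$ for every $n \in \mathbb{N}$. Consequently $0$ belongs simultaneously to the spectrum of $C_m^{(\mathrm{R})}$ and of $-C_n^{(\mathrm{R})}$, so the Lyapunov equation, and hence the original convolution equation, fails to have a unique solution for some right-hand side $B$. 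No step presents a serious obstacle; the only mildly non-routine ingredient is the corner-symmetry identity ${U_n^{(\mathrm{R})}}^\top + {L_n^{(\mathrm{R})}}^\top = U_n^{(\mathrm{R})} + L_n^{(\mathrm{R})}$ used to collapse the transposes in~\eqref{eq:gen-Sylvester:r}.
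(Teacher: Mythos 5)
Your proposal is correct and follows essentially the same route as the paper: reduce to the Lyapunov equation $C_m^{(\mathrm{R})}X + XC_n^{(\mathrm{R})} = B$ and use Lemma~\ref{lmm:qteig} with $\alpha=2$, $\beta=\gamma=-1$ to exhibit the common eigenvalue $0$. The only difference is that you spell out the reduction from Theorem~\ref{thm:reflexive} (including the symmetry of $U_n^{(\mathrm{R})}+L_n^{(\mathrm{R})}$ needed to drop the transposes), which the paper states without derivation.
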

    \begin{proof}
      By a similar argument to Corollary~\ref{cor:edbp}, we only need to show that $C_m^{(\mathrm{R})}$ and $C_n^{(\mathrm{R})}$ in~\eqref{eq:edbr} always have at least one eigenvalue in common.
      From Lemma~\ref{lmm:qteig}, the eigenvalues of $C_n^{(\mathrm{R})}$ are given by
      \begin{equation}
        \lambda_k = 
        \begin{cases}
          2 + 2\cos\left(\frac{k\pi}{n}\right), &\qquad k=1,2,\ldots,n-1,\\
          0, &\qquad k=n.
        \end{cases}
      \end{equation}
      Hence, $C_m^{(\mathrm{R})}$ and $-C_n^{(\mathrm{R})}$ always have the common eigenvalue $0$. 
    \end{proof}

    \subsection{Edge detect C}
    \subsubsection{Zero boundary condition}
    When $F$ is the edge detect C filter, Eq.~\eqref{eq:gen-Sylvester} becomes
    \begin{equation}
      9X - T_mXT_n = B, \label{eq:edc}
    \end{equation}
    where $T_n:=L_n + I_n + U_n\in\mathbb{R}^{n\times n}$.
    Therefore, we have the following result:
    \begin{cor}
      {\bfseries(Zero boundary condition)}
      Let $F\in\mathbb{R}^{3\times 3}$ be the edge detect C filter~\eqref{f-edc}.
      Then, Eq.~\eqref{eq:convolution} with the zero boundary condition has a unique solution for every $B$.
    \end{cor}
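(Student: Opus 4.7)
The plan is to reduce the matrix equation~\eqref{eq:edc} to a linear system via vectorization and show that the resulting coefficient matrix is nonsingular by bounding the spectrum of $T_n$. Using the Kronecker product, the equation $9X - T_m X T_n = B$ is equivalent to
\begin{equation}
  \left(9 I_{mn} - T_n^\top \otimes T_m\right)\rmvec(X) = \rmvec(B),
\end{equation}
so the equation has a unique solution for every $B$ if and only if $9 I_{mn} - T_n^\top \otimes T_m$ is nonsingular.

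Since $T_m$ and $T_n$ are symmetric tridiagonal Toeplitz matrices, I would apply Lemma~\ref{lmm:trieig} with $(\alpha,\beta,\gamma)=(1,1,1)$ to get that the eigenvalues of $T_m$ and $T_n$ are $\lambda_i = 1 + 2\cos\!\left(\frac{i\pi}{m+1}\right)$ and $\mu_j = 1 + 2\cos\!\left(\frac{j\pi}{n+1}\right)$ respectively. The eigenvalues of $T_n^\top \otimes T_m$ are then $\lambda_i \mu_j$, and hence the eigenvalues of $9 I_{mn} - T_n^\top \otimes T_m$ are $9 - \lambda_i \mu_j$.

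The key observation is that for $i\in\{1,\ldots,m\}$ and $j\in\{1,\ldots,n\}$, the arguments $\frac{i\pi}{m+1}$ and $\frac{j\pi}{n+1}$ lie strictly inside $(0,\pi)$, so the cosines lie strictly in $(-1,1)$. Consequently $\lambda_i,\mu_j\in(-1,3)$, which gives $|\lambda_i\mu_j| < 9$ and therefore $9 - \lambda_i \mu_j \neq 0$ for every pair $(i,j)$. This shows that $9 I_{mn} - T_n^\top \otimes T_m$ is nonsingular, completing the proof.

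There is no substantial obstacle here; the only point needing care is the strict (rather than weak) inequality on the cosines, which follows because $k/(m+1)$ and $k/(n+1)$ never equal $0$ or $1$ for $k$ in the stated range. The argument parallels Corollary~\ref{cor:box} but replaces the nonsingularity of a single factor by the nonsingularity of the operator $9 I_{mn} - T_n^\top \otimes T_m$ associated with the generalized Sylvester form.
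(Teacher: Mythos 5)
Your proposal is correct and follows essentially the same route as the paper: the paper likewise reduces the question to whether $\lambda_i^{(m)}\lambda_j^{(n)}\ne 9$ for all eigenvalue pairs of $T_m$ and $T_n$ (you simply make the underlying vectorization/Kronecker-product argument explicit), and then uses Lemma~\ref{lmm:trieig} to place all eigenvalues strictly in $(-1,3)$ so that the products stay below $9$ in absolute value. No gaps.
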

    \begin{proof}
      The proof is completed by showing that Eq.~\eqref{eq:edc} has a unique solution for every $B$.
      Let $\lambda_1^{(m)}, \ldots, \lambda_n^{(m)}$ and $\lambda_1^{(n)}, \ldots, \lambda_n^{(n)}$ be the eigenvalues of $T_m$ and $T_n$, respectively.
      Then, Eq.~\eqref{eq:edc} has a unique solution for every $B$ if and only if $\lambda_i^{(m)}\lambda_j^{(n)}\ne 9$ for $i\in\{1,2,\ldots,m\}$ and $j\in\{1,2,\ldots,n\}$.
      Therefore, we only need to show that $\lambda_i^{(m)}\lambda_j^{(n)}\ne 9$ for all $i,j$.
      From Lemma~\ref{lmm:trieig}, the eigenvalues of $T_n$ are given by
      \begin{equation}
        \lambda_k^{(n)} = 1 + 2\cos\left(\frac{k\pi}{n+1}\right), \qquad k=1,2,\ldots,n.
      \end{equation}
      Thus, it follows that $-1<\cos\left(\frac{k\pi}{n+1}\right)<1$ for $k\in\{1,2,\ldots, n\}$, which implies $-1<\lambda_k^{(n)}<3$.
      Hence, $\lambda_i^{(m)}\lambda_j^{(n)}<9$ for all $i,j$, which completes the proof.
    \end{proof}
  \subsubsection{Periodic boundary condition}
    When $F$ is the edge detect C filter, Eq.~\eqref{eq:gen-Sylvester:p} becomes
    \begin{equation}
      9X - T_m^{(\mathrm{P})}XT_n^{(\mathrm{P})}=B, \label{eq:edcp}
    \end{equation}
    where $T_n^{(\mathrm{P})}:=L_n^{(\mathrm{P})} + I_n + U_n^{(\mathrm{P})}\in\mathbb{R}^{n\times n}$.
    Therefore, the following result holds:
    \begin{cor}\label{cor:edcp}
      {\bfseries(Periodic boundary condition)}
      Let $F\in\mathbb{R}^{3\times 3}$ be the edge detect C filter~\eqref{f-edc}.
      Then, Eq.~\eqref{eq:convolution} with the periodic boundary condition does not have a unique solution for every $B$.
    \end{cor}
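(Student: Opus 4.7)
The plan is to reduce Corollary~\ref{cor:edcp} to an eigenvalue condition on $T_m^{(\mathrm{P})}$ and $T_n^{(\mathrm{P})}$, in the same style as the previous corollaries. Vectorizing~\eqref{eq:edcp} turns it into a linear system with coefficient matrix $9 I_{mn} - (T_n^{(\mathrm{P})})^{\top}\otimes T_m^{(\mathrm{P})}$, whose eigenvalues are $9-\lambda_i^{(m)}\lambda_j^{(n)}$, where $\lambda_i^{(m)}$ and $\lambda_j^{(n)}$ denote the eigenvalues of $T_m^{(\mathrm{P})}$ and $T_n^{(\mathrm{P})}$ respectively. Hence the equation fails to have a unique solution for some $B$ if and only if $\lambda_i^{(m)}\lambda_j^{(n)}=9$ for at least one index pair $(i,j)$.

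I would then invoke Lemma~\ref{lmm:circeig} to write the eigenvalues of the circulant matrix $T_n^{(\mathrm{P})}=L_n^{(\mathrm{P})}+I_n+U_n^{(\mathrm{P})}$ as
\begin{equation}
  \lambda_k^{(n)} = 1 + \exp\!\left(-\tfrac{2k\pi\mathrm{i}}{n}\right) + \exp\!\left(\tfrac{2k\pi\mathrm{i}}{n}\right) = 1 + 2\cos\!\left(\tfrac{2k\pi}{n}\right), \qquad k=1,2,\ldots,n,
\end{equation}
and analogously for $\lambda_k^{(m)}$.

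The key observation, which is also the only delicate point, is that the index $k=n$ always yields $\lambda_n^{(n)} = 1 + 2\cos(2\pi) = 3$, and similarly $\lambda_m^{(m)} = 3$, for every choice of $m,n\in\mathbb{N}$. Consequently the product $\lambda_m^{(m)}\lambda_n^{(n)} = 9$ is a pole of the Sylvester-type solvability criterion for every $m$ and $n$, so $9 I_{mn} - (T_n^{(\mathrm{P})})^{\top}\otimes T_m^{(\mathrm{P})}$ is singular. This produces a right-hand side $B$ for which~\eqref{eq:edcp}, and hence~\eqref{eq:convolution} under the periodic boundary condition with $F=F_{\rm EDC}$, has no unique solution, finishing the proof. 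The main obstacle is really just recognising that the constant row sum of the circulant $T_n^{(\mathrm{P})}$ forces $3$ to be an eigenvalue regardless of $n$; everything else is a direct reuse of the arguments already developed for the edge detect~B case.
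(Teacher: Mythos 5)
Your proposal is correct and follows essentially the same route as the paper: both reduce the problem to the Stein-type solvability criterion $\lambda_i^{(m)}\lambda_j^{(n)}\ne 9$, compute the circulant eigenvalues $1+2\cos(2k\pi/n)$ via Lemma~\ref{lmm:circeig}, and observe that the index $k=n$ always gives the eigenvalue $3$, so $\lambda_m^{(m)}\lambda_n^{(n)}=9$ for all $m,n$. The explicit Kronecker-product vectorization you include is only an unpacking of the criterion the paper invokes directly.
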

    \begin{proof}
      The proof is completed by proving that Eq.~\eqref{eq:edcp} does not have a unique solution for every $B$.
      Let c
      Eq.~\eqref{eq:edcp} does not have a unique solution for every $B$ if and only if there exist $\lambda_i^{(m)}$ and $\lambda_j^{(n)}$ such that $\lambda_i^{(m)}\lambda_j^{(n)}=9$.
      From Lemma~\ref{lmm:circeig}, the eigenvalues of $T_n^{(\mathrm{P})}$ are given by
      \begin{equation}
        \lambda_k^{(n)} = 1 + 2\cos\left(\frac{2k\pi}{n}\right), \qquad k=1,2,\ldots,n.
      \end{equation}
      Thus, it follows that $\lambda_m^{(m)}\lambda_n^{(n)}=9$, which completes the proof.
    \end{proof}
  \subsubsection{Reflexive boundary condition}
    When $F$ is the edge detect C filter, Eq.~\eqref{eq:gen-Sylvester:r} becomes
    \begin{equation}
      9X - T_m^{(\mathrm{R})}XT_n^{(\mathrm{R})}=B, \label{eq:edcr}
    \end{equation}
    where $T_n^{(\mathrm{R})}:=L_n^{(\mathrm{R})} + I_n + U_n^{(\mathrm{R})}\in\mathbb{R}^{n\times n}$.
    Therefore, the following result holds:
    \begin{cor}
      {\bfseries(Reflexive boundary condition)}
      Let $F\in\mathbb{R}^{3\times 3}$ be the edge detect C filter~\eqref{f-edc}.
      Then, Eq.~\eqref{eq:convolution} with the reflexive boundary condition does not have a unique solution for every $B$.
    \end{cor}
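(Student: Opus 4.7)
The plan is to reduce the problem, as in the preceding corollaries, from the generalized Sylvester form to a spectral condition on the matrix $T_n^{(\mathrm{R})} := L_n^{(\mathrm{R})} + I_n + U_n^{(\mathrm{R})}$. Specifically, Eq.~\eqref{eq:edcr} fails to have a unique solution for every $B$ if and only if the linear map $X \mapsto 9X - T_m^{(\mathrm{R})} X T_n^{(\mathrm{R})}$ is singular, which, by the standard Kronecker-product argument (the map has matrix $9 I_{mn} - {T_n^{(\mathrm{R})}}^\top \otimes T_m^{(\mathrm{R})}$), is equivalent to the existence of eigenvalues $\lambda_i^{(m)}$ of $T_m^{(\mathrm{R})}$ and $\lambda_j^{(n)}$ of $T_n^{(\mathrm{R})}$ with $\lambda_i^{(m)} \lambda_j^{(n)} = 9$. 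So it suffices to exhibit such a pair.

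Next, I would invoke Lemma~\ref{lmm:qteig} with $\alpha = 1$, $\beta = \gamma = 1$ to compute the spectrum of $T_n^{(\mathrm{R})}$, obtaining
\begin{equation}
  \lambda_k^{(n)} = \begin{cases} 1 + 2\cos\left(\frac{k\pi}{n}\right), & k = 1,2,\ldots,n-1, \\ 3, & k = n. \end{cases}
\end{equation}
The key observation is that the exceptional eigenvalue $\lambda_n^{(n)} = 3$ appears for every $n \in \mathbb{N}$, independently of $n$, and similarly $\lambda_m^{(m)} = 3$ for every $m$.

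Therefore, taking $i = m$ and $j = n$ yields $\lambda_m^{(m)} \lambda_n^{(n)} = 3 \cdot 3 = 9$, which violates the invertibility condition. Hence~\eqref{eq:edcr} is singular for all $m,n$, completing the proof. There is no real obstacle here: the whole argument hinges on Lemma~\ref{lmm:qteig} producing the eigenvalue $3$ as the exceptional case, and mirrors the strategy used in Corollary~\ref{cor:edcp}, with the difference that in the reflexive case the common-eigenvalue pair comes from the deterministic $k=n$ entry of the spectrum rather than from a cosine evaluating to $1$.
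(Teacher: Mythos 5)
Your proposal is correct and follows essentially the same route as the paper: reduce to the condition that some pair of eigenvalues of $T_m^{(\mathrm{R})}$ and $T_n^{(\mathrm{R})}$ has product $9$, then use Lemma~\ref{lmm:qteig} to read off the exceptional eigenvalue $3$ for both matrices, giving $\lambda_m^{(m)}\lambda_n^{(n)}=9$. The only difference is that you spell out the Kronecker-product justification that the paper delegates to ``a similar argument to Corollary~\ref{cor:edcp}.''
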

    \begin{proof}
      Let $\lambda_1^{(m)}, \ldots, \lambda_m^{(m)}$ and $\lambda_1^{(n)}, \ldots, \lambda_n^{(n)}$ be the eigenvalues of $T_m^{(\mathrm{R})}$ and $T_n^{(\mathrm{R})}$ in~\eqref{eq:edcr}, respectively.
      By a similar argument to Corollary~\ref{cor:edcp}, we only need to show that there exist $\lambda_i^{(m)}$ and $\lambda_j^{(n)}$ such that $\lambda_i^{(m)}\lambda_j^{(n)}=9$.
      From Lemma~\ref{lmm:qteig}, the eigenvalues of $T_n^{(\mathrm{R})}$ are given by
      \begin{equation}
        \lambda_k^{(n)} = 
        \begin{cases}
          1 + 2\cos\left(\frac{k\pi}{n}\right),&\qquad k=1,2,\ldots,n-1,\\
          3, &\qquad k=n,
        \end{cases}
      \end{equation}
      which implies $\lambda_m^{(m)}\lambda_n^{(n)}=9$.
    \end{proof}

    \subsection{Sharpen}
    \subsubsection{Zero boundary condition}
    When $F$ is the sharpen filter, Eq.~\eqref{eq:gen-Sylvester} becomes
    \begin{equation}
      \tilde{C}_mX + X\tilde{C}_n=B, \label{eq:shp}
    \end{equation}
    where $\tilde{C}_n:=\frac{5}{2}I_n - U_n - L_n
    \in\mathbb{R}^{n\times n}$.
    From this, the following result holds:
    \begin{cor}
      {\bfseries(Zero boundary condition)}
      Let $F\in\mathbb{R}^{3\times 3}$ be the sharpen filter~\eqref{f-shp}.
      Then, Eq.~\eqref{eq:convolution} with the zero boundary condition has a unique solution for every $B$.
    \end{cor}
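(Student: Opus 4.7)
The plan is to mimic the proof of Corollary~\ref{cor:edb}: reduce the statement to a Lyapunov equation and use the tridiagonal Toeplitz eigenvalue formula from Lemma~\ref{lmm:trieig} to show the relevant spectra are disjoint.

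First, I would note that by Theorem~\ref{thm:zero} applied to the sharpen filter~\eqref{f-shp}, the convolution equation under the zero boundary condition is equivalent to~\eqref{eq:shp}, which is a Lyapunov equation $\tilde{C}_m X + X \tilde{C}_n = B$. It is a standard fact that such an equation has a unique solution for every right-hand side $B$ if and only if $\tilde{C}_m$ and $-\tilde{C}_n$ have no eigenvalue in common. So it suffices to locate the spectra of $\tilde{C}_m$ and $\tilde{C}_n$.

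Next, I would apply Lemma~\ref{lmm:trieig} to $\tilde{C}_n = \rmtridiag(-1, \tfrac{5}{2}, -1)$. With $\alpha = \tfrac{5}{2}$ and $\beta = \gamma = -1$, the eigenvalues are
\begin{equation}
  \lambda_k = \tfrac{5}{2} + 2\cos\!\left(\tfrac{k\pi}{n+1}\right), \qquad k = 1, 2, \ldots, n.
\end{equation}
Since $-1 < \cos(k\pi/(n+1)) < 1$ for every $k \in \{1,\ldots,n\}$, each eigenvalue lies in the open interval $(\tfrac{1}{2}, \tfrac{9}{2})$, and in particular is strictly positive. The same conclusion holds for $\tilde{C}_m$.

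Therefore every eigenvalue of $\tilde{C}_m$ is positive while every eigenvalue of $-\tilde{C}_n$ is negative, so they cannot share a common value. By the unique-solvability criterion for Lyapunov equations, Eq.~\eqref{eq:shp} admits a unique solution $X$ for every $B$, which proves the corollary. There is no real obstacle here; the argument is essentially identical to Corollary~\ref{cor:edb}, with the shift of the diagonal from $2$ to $\tfrac{5}{2}$ only widening the positivity window.
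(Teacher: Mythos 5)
Your proposal is correct and follows essentially the same route as the paper's own proof: reduce to the Lyapunov equation~\eqref{eq:shp}, apply Lemma~\ref{lmm:trieig} with $\alpha=\tfrac{5}{2}$, $\beta=\gamma=-1$ to place the spectrum of $\tilde{C}_n$ in $(\tfrac{1}{2},\tfrac{9}{2})$, and conclude that $\tilde{C}_m$ and $-\tilde{C}_n$ share no eigenvalue. No issues.
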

    \begin{proof}
      By a similar argument to Corollary~\ref{cor:edb}, it suffices to prove that $\tilde{C}_m$ and $-\tilde{C}_n$ in~\eqref{eq:shp} have no common eigenvalue.
      From Lemma~\ref{lmm:trieig}, the eigenvalues of $\tilde{C}_n$ are given by
      \begin{equation}
        \lambda_k = \frac{5}{2} + 2\cos\left(\frac{k\pi}{n+1}\right), \qquad k=1,2,\ldots,n.
      \end{equation}
      Thus, it follows that $-1<\cos\left(\frac{k\pi}{n+1}\right)<1$ for $k\in\{1,2,\ldots,n\}$, which implies $\frac{1}{2}<\lambda_k<\frac{9}{2}$.
      Hence, $\tilde{C}_m$ and $-\tilde{C}_n$ does not have common eigenvalues.
    \end{proof}
  \subsubsection{Periodic boundary condition}
    When $F$ is the sharpen filter, Eq.~\eqref{eq:gen-Sylvester:p} can be rewritten as
    \begin{equation}
      \tilde{C}_m^{(\mathrm{P})}X + X\tilde{C}_n^{(\mathrm{P})} = B,\label{eq:shpp}
    \end{equation}
    where $\tilde{C}_n^{(\mathrm{P})}:=\frac{5}{2}I_n - U_n^{(\mathrm{P})} - L_n^{(\mathrm{P})}\in\mathbb{R}^{n\times n}$.
    Hence, the following result holds:
    \begin{cor}
      {\bfseries(Periodic boundary condition)}
      Let $F\in\mathbb{R}^{3\times 3}$ be the sharpen filter~\eqref{f-shp}.
      Then, Eq.~\eqref{eq:convolution} with the periodic boundary condition has a unique solution for every $B$.
    \end{cor}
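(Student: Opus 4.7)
The plan is to reduce the statement to a standard Sylvester-type solvability criterion and then bound the spectrum of the circulant matrix $\tilde{C}_n^{(\mathrm{P})}$. By Theorem~\ref{thm:periodic} applied to the sharpen filter~\eqref{f-shp}, the convolution equation with periodic boundary condition is equivalent to~\eqref{eq:shpp}, so it suffices to show that~\eqref{eq:shpp} has a unique solution for every right-hand side $B$.

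First I would invoke the classical fact (as used in Corollary~\ref{cor:edb}) that a Sylvester equation of the form $AX+XD=B$ has a unique solution for every $B$ if and only if $A$ and $-D$ share no eigenvalue. Thus the task reduces to showing that $\tilde{C}_m^{(\mathrm{P})}$ and $-\tilde{C}_n^{(\mathrm{P})}$ have disjoint spectra.

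Next, since $\tilde{C}_n^{(\mathrm{P})}=\tfrac{5}{2}I_n-U_n^{(\mathrm{P})}-L_n^{(\mathrm{P})}$ is a circulant matrix, Lemma~\ref{lmm:circeig} gives its eigenvalues explicitly as
\begin{equation}
\lambda_k=\tfrac{5}{2}-\exp\!\left(-\tfrac{2k\pi\im}{n}\right)-\exp\!\left(\tfrac{2k\pi\im}{n}\right)=\tfrac{5}{2}-2\cos\!\left(\tfrac{2k\pi}{n}\right),\qquad k=1,2,\ldots,n.
\end{equation}
Since $-1\le \cos(2k\pi/n)\le 1$, every eigenvalue lies in the interval $[\tfrac{1}{2},\tfrac{9}{2}]$, so all eigenvalues of $\tilde{C}_m^{(\mathrm{P})}$ are strictly positive, while all eigenvalues of $-\tilde{C}_n^{(\mathrm{P})}$ are strictly negative. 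The two spectra are therefore disjoint, and the solvability criterion yields the conclusion.

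There is essentially no obstacle here: the argument is the periodic-boundary analogue of Corollary~\ref{cor:edb}, and the only quantitative input is the elementary bound $|2\cos\theta|\le 2<\tfrac{5}{2}$, which guarantees positivity of the spectrum of $\tilde{C}_n^{(\mathrm{P})}$. If anything, the only mild subtlety is remembering that the circulant eigenvalue formula from Lemma~\ref{lmm:circeig} involves $2k\pi/n$ rather than $k\pi/(n+1)$, so the argument does not rely on the shift $n+1$ in the denominator and works uniformly in $m$ and $n$.
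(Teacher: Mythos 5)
Your proof is correct and follows essentially the same route as the paper: reduce to the Lyapunov-type equation~\eqref{eq:shpp}, apply Lemma~\ref{lmm:circeig} to get $\lambda_k=\tfrac{5}{2}-2\cos(2k\pi/n)\in[\tfrac{1}{2},\tfrac{9}{2}]$, and conclude that $\tilde{C}_m^{(\mathrm{P})}$ and $-\tilde{C}_n^{(\mathrm{P})}$ have disjoint spectra. No gaps.
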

    \begin{proof}
      To complete the proof, we only need to prove that $\tilde{C}_m^{(\mathrm{P})}$ and $-\tilde{C}_n^{(\mathrm{P})}$ in~\eqref{eq:shpp} have no common eigenvalue.
      From Lemma~\ref{lmm:circeig}, the eigenvalues of $\tilde{C}_n^{(\mathrm{P})}$ are given by
      \begin{equation}
        \lambda_k = \frac{5}{2} - \exp\left(-\frac{2k\pi\im}{n}\right) - \exp\left(\frac{2k\pi\im}{n}\right) = \frac{5}{2} - 2\cos\left(\frac{2k\pi}{n}\right), \qquad k=1,2,\ldots,n.
      \end{equation}
      Thus, it follows that $-1\le\cos\left(\frac{2k\pi}{n}\right)\le1$ for any $k\in\{1,2,\ldots,n\}$, which implies $\frac{1}{2}\le\lambda_k\le\frac{9}{2}$.
      Hence, $\tilde{C}_m^{(\mathrm{P})}$ and $-\tilde{C}_n^{(\mathrm{P})}$ have no common eigenvalue.
    \end{proof}
  \subsubsection{Reflexive boundary condition}
    When $F$ is the sharpen filter, Eq.~\eqref{eq:gen-Sylvester:r} can be rewritten as
    \begin{equation}
      \tilde{C}_m^{(\mathrm{R})}X + X\tilde{C}_n^{(\mathrm{R})} = B,\label{eq:shpr}
    \end{equation}
    where $\tilde{C}_n^{(\mathrm{R})}:=\frac{5}{2}I_n - U_n^{(\mathrm{R})} - L_n^{(\mathrm{R})}\in\mathbb{R}^{n\times n}$.
    Hence, the following result holds:
    \begin{cor}
      {\bfseries(Reflexive boundary condition)}
      Let $F\in\mathbb{R}^{3\times 3}$ be the sharpen filter~\eqref{f-shp}.
      Then, Eq.~\eqref{eq:convolution} with the reflexive boundary condition has a unique solution for every $B$.
    \end{cor}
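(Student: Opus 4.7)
The plan is to follow the template established by the two preceding sharpen-filter corollaries: reduce Eq.~\eqref{eq:convolution} under the reflexive boundary condition to the Lyapunov equation~\eqref{eq:shpr} via Theorem~\ref{thm:reflexive}, and then combine the classical spectral criterion for Lyapunov equations with Lemma~\ref{lmm:qteig} to verify that the two coefficient matrices have disjoint spectra.

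First I would invoke Theorem~\ref{thm:reflexive} to restate the problem as $\tilde{C}_m^{(\mathrm{R})}X + X\tilde{C}_n^{(\mathrm{R})} = B$. The standard Sylvester/Lyapunov theory then tells us that this has a unique solution for every $B$ if and only if $\tilde{C}_m^{(\mathrm{R})}$ and $-\tilde{C}_n^{(\mathrm{R})}$ share no eigenvalue. So the entire problem reduces to a spectral statement, and it suffices to prove that every eigenvalue of $\tilde{C}_n^{(\mathrm{R})}$ (and hence of $\tilde{C}_m^{(\mathrm{R})}$) is strictly positive.

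Next I would apply Lemma~\ref{lmm:qteig} to $\tilde{C}_n^{(\mathrm{R})} = \tfrac{5}{2}I_n - U_n^{(\mathrm{R})} - L_n^{(\mathrm{R})}$. A short inspection of the definitions~\eqref{qshiftmatrix} shows that $\tilde{C}_n^{(\mathrm{R})}$ is precisely a matrix of the form assumed in the lemma with $\alpha = 5/2$, $\beta = \gamma = -1$, and boundary diagonal entries $\alpha + \gamma = \alpha + \beta = 3/2$. The lemma then yields $\lambda_k = 5/2 + 2\cos(k\pi/n)$ for $k=1,\ldots,n-1$ and $\lambda_n = \alpha + \beta + \gamma = 1/2$. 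Because $\cos(k\pi/n) \in (-1,1)$ for $1 \le k \le n-1$, the first block lies in $(1/2, 9/2)$, while the extra eigenvalue is exactly $1/2$; in particular every eigenvalue of $\tilde{C}_n^{(\mathrm{R})}$, and analogously of $\tilde{C}_m^{(\mathrm{R})}$, is strictly positive. Consequently $-\tilde{C}_n^{(\mathrm{R})}$ has strictly negative spectrum, which cannot intersect the positive spectrum of $\tilde{C}_m^{(\mathrm{R})}$, giving the claimed unique solvability.

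I do not expect any genuine obstacle here: the argument parallels the earlier sharpen corollaries almost verbatim. The only small technical points are identifying $\alpha,\beta,\gamma$ correctly so that $\sqrt{\beta\gamma} = \sqrt{(-1)(-1)} = 1$ is applied with the right sign in Lemma~\ref{lmm:qteig}, and confirming that the additional eigenvalue $\alpha + \beta + \gamma = 1/2$ introduced by the reflexive boundary is still positive and therefore does not spoil the spectral separation.
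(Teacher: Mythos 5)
Your proposal is correct and follows essentially the same route as the paper: reduce to the Lyapunov equation~\eqref{eq:shpr}, apply Lemma~\ref{lmm:qteig} with $\alpha=\tfrac{5}{2}$, $\beta=\gamma=-1$ to get the spectrum $\{\tfrac{5}{2}+2\cos(k\pi/n)\}\cup\{\tfrac{1}{2}\}\subset[\tfrac{1}{2},\tfrac{9}{2}]$, and conclude that $\tilde{C}_m^{(\mathrm{R})}$ and $-\tilde{C}_n^{(\mathrm{R})}$ have disjoint spectra. No gaps.
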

    \begin{proof}
      The proof is completed by showing that $\tilde{C}_m^{(\mathrm{R})}$ and $-\tilde{C}_n^{(\mathrm{R})}$ in~\eqref{eq:shpr} have no common eigenvalue.
      From Lemma~\ref{lmm:qteig}, the eigenvalues of $\tilde{C}_n^{(\mathrm{R})}$ are given by
      \begin{equation}
        \lambda_k =
        \begin{cases}
          \frac{5}{2} + 2\cos\left(\frac{k\pi}{n}\right),&\qquad k=1,2,\ldots,n-1,\\
          \frac{1}{2}, &\qquad k=n.
        \end{cases}
      \end{equation}
      Thus, it follows that $-1\le\cos\left(\frac{k\pi}{n}\right)\le1$ for any $k\in\{1,2,\ldots,n-1\}$, which implies $\frac{1}{2}\le\lambda_k\le\frac{9}{2}$.
      Hence, $\tilde{C}_m^{(\mathrm{R})}$ and $-\tilde{C}_n^{(\mathrm{R})}$ have no common eigenvalue.
    \end{proof}

  \subsection{Emboss}
    In this subsection, we just characterize the unique solvability of the convolution equation~\eqref{eq:convolution} in the case of the zero or periodic boundary condition. 
    \subsubsection{Zero boundary condition}
    When $F$ is the emboss filter, Eq.~\eqref{eq:gen-Sylvester} becomes
    \begin{equation}
      X+\frac{1}{2}\left(\tilde{L}_m X\tilde{L}^\top_n - \tilde{L}^\top_mX\tilde{L}_n\right)=B \label{eq:emb}
    \end{equation}
    where $\tilde{L}_n:=I_n + 2L_n\in\mathbb{R}^{n\times n}$.
    Therefore, we have the following results:
    \begin{cor}\label{cor:emb}
      {\bfseries(Zero boundary condition)}
      Let $F\in\mathbb{R}^{3\times 3}$ be the emboss filter~\eqref{f-emb}.
      Then, Eq.~\eqref{eq:convolution} with the zero boundary condition has a unique solution for every $B$.
    \end{cor}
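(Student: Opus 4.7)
The plan is to vectorize Eq.~\eqref{eq:emb} and recognize that the resulting coefficient matrix is $I$ plus a skew-symmetric matrix, which is always nonsingular.

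First, I would apply the vectorization operator to~\eqref{eq:emb}. Using the standard identity $\rmvec(AXC)=(C^\top\otimes A)\rmvec(X)$, one obtains
\begin{equation}
  \left[I_{mn}+\tfrac{1}{2}\bigl(\tilde{L}_n\otimes\tilde{L}_m-\tilde{L}_n^\top\otimes\tilde{L}_m^\top\bigr)\right]\rmvec(X)=\rmvec(B).
\end{equation}
Setting $M:=\tilde{L}_n\otimes\tilde{L}_m$, the coefficient matrix is $I_{mn}+\tfrac{1}{2}(M-M^\top)$, and since $(M-M^\top)^\top=-(M-M^\top)$ the matrix $K:=\tfrac{1}{2}(M-M^\top)$ is skew-symmetric.

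Next, I would show that $I_{mn}+K$ is nonsingular for any real skew-symmetric $K$. Suppose $(I_{mn}+K)\bm{v}=\bm{0}$ for some $\bm{v}\in\mathbb{R}^{mn}$. Then $K\bm{v}=-\bm{v}$, and taking the inner product with $\bm{v}$ gives $\bm{v}^\top K\bm{v}=-\|\bm{v}\|_2^2$. But skew-symmetry implies $\bm{v}^\top K\bm{v}=-\bm{v}^\top K\bm{v}$, hence $\bm{v}^\top K\bm{v}=0$, forcing $\|\bm{v}\|_2=0$ and thus $\bm{v}=\bm{0}$. Therefore $I_{mn}+K$ is nonsingular, so~\eqref{eq:emb} admits a unique solution for every $B$.

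The only slightly nontrivial step is recognizing that the ``symmetric cross-term'' structure $\tilde{L}_m X\tilde{L}_n^\top-\tilde{L}_m^\top X\tilde{L}_n$ produces exactly a skew-symmetric Kronecker operator; once this is noticed, the remainder is a one-line linear-algebra argument that does not require computing any eigenvalues of $\tilde{L}_n$. I expect this to be the only potential obstacle, since in contrast to the previous filters in Section~\ref{sec3}, the emboss filter is not symmetric, so an eigenvalue-based argument via Lemma~\ref{lmm:trieig} is not directly applicable, and one has to exploit the skew structure instead.
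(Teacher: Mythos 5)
Your proposal is correct and follows essentially the same route as the paper: vectorize~\eqref{eq:emb} to get the coefficient matrix $I_{mn}+\tfrac{1}{2}\left(\tilde{L}_n\otimes\tilde{L}_m-\tilde{L}_n^\top\otimes\tilde{L}_m^\top\right)$ and conclude nonsingularity from its identity-plus-skew-symmetric structure. The only difference is that you spell out the standard argument that $I+K$ is nonsingular for skew-symmetric $K$, which the paper takes for granted.
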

    \begin{proof}
      The proof is completed by showing that Eq.~\eqref{eq:emb} always has a unique solution for every $B$.
      Applying the vec operator to~\eqref{eq:emb} yields
      \begin{equation}
        \left[I_n\otimes I_m + \frac{1}{2}\left(\tilde{L}_n\otimes \tilde{L}_m - \tilde{L}_n^\top \otimes \tilde{L}_m^\top\right)\right]\bm{x} = \bm{b}.\label{eq:emb-ls}
      \end{equation}
      Then, the coefficient matrix of~\eqref{eq:emb-ls} is nonsingular because it is a sum of the identity matrix $I_n\otimes I_m$ and the skew-symmetric matrix $\frac{1}{2}\left(\tilde{L}_n\otimes \tilde{L}_m - \tilde{L}_n^\top \otimes \tilde{L}_m^\top\right)$.
      Thus, ~\eqref{eq:emb} always has a unique soluton for every $B$, which completes the proof.
    \end{proof}
  \subsubsection{Periodic boundary condition}
    When $F$ is the emboss filter, Eq.~\eqref{eq:gen-Sylvester:p} becomes
    \begin{equation}
      X+\frac{1}{2}\left(\tilde{L}_m^{(\mathrm{P})}X\left({\tilde{L}_n}^{(\mathrm{P})}\right)^\top - \left(\tilde{L}_m^{(\mathrm{P})}\right)^\top X\tilde{L}_n^{(\mathrm{P})}\right)=B \label{eq:embp}
    \end{equation}
    where $\tilde{L}_n^{(\mathrm{P})}:=I_n + 2L_n^{(\mathrm{P})}\in\mathbb{R}^{n\times n}$.
    \begin{cor}
      {\bfseries(Periodic boundary condition)}
      Let $F\in\mathbb{R}^{3\times 3}$ be the emboss filter~\eqref{f-emb}.
      Then, Eq.~\eqref{eq:convolution} with the periodic boundary condition has a unique solution for every $B$.
    \end{cor}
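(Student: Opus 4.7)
The plan is to mimic the proof of Corollary~\ref{cor:emb} almost verbatim, since the only structural change compared with the zero boundary condition is the replacement of the shift matrices $\tilde{L}_m,\tilde{L}_n$ by the cyclic shift matrices $\tilde{L}_m^{(\mathrm{P})},\tilde{L}_n^{(\mathrm{P})}$, and this replacement does not affect the algebraic property that drives the nonsingularity argument.

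First, I would apply the $\rmvec$ operator to~\eqref{eq:embp}, using the standard identity $\rmvec(AXC) = (C^\top \otimes A)\rmvec(X)$. This converts~\eqref{eq:embp} into the linear system
\begin{equation}
  \Bigl[I_n\otimes I_m + \tfrac{1}{2}\bigl(\tilde{L}_n^{(\mathrm{P})}\otimes \tilde{L}_m^{(\mathrm{P})} - (\tilde{L}_n^{(\mathrm{P})})^\top \otimes (\tilde{L}_m^{(\mathrm{P})})^\top\bigr)\Bigr]\bm{x}=\bm{b}.
\end{equation}
Second, I would observe that the matrix
\begin{equation}
  M:=\tilde{L}_n^{(\mathrm{P})}\otimes \tilde{L}_m^{(\mathrm{P})} - (\tilde{L}_n^{(\mathrm{P})})^\top \otimes (\tilde{L}_m^{(\mathrm{P})})^\top
\end{equation}
is skew-symmetric, which follows immediately from $M^\top = (\tilde{L}_n^{(\mathrm{P})})^\top\otimes(\tilde{L}_m^{(\mathrm{P})})^\top - \tilde{L}_n^{(\mathrm{P})}\otimes \tilde{L}_m^{(\mathrm{P})} = -M$ and the standard rule $(A\otimes B)^\top = A^\top \otimes B^\top$. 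Hence the coefficient matrix is of the form $I_{mn} + \tfrac{1}{2}M$ with $M$ real skew-symmetric.

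Third, I would conclude using the fact that a real skew-symmetric matrix has only purely imaginary eigenvalues, so every eigenvalue of $I_{mn}+\tfrac{1}{2}M$ has the form $1+\im t$ for some $t\in\mathbb{R}$, which is never zero. Thus the coefficient matrix is nonsingular and the system has a unique solution for every $\bm{b}$, which in turn establishes the unique solvability of~\eqref{eq:embp} and therefore of~\eqref{eq:convolution} with the periodic boundary condition. The only mild obstacle, if any, is to confirm carefully that the sign pattern of the vectorized expression really yields a skew-symmetric matrix (as opposed to a symmetric one), but this is a routine transpose check and not a genuine difficulty; no eigenvalue information specific to $\tilde{L}_n^{(\mathrm{P})}$ (in particular, no appeal to Lemma~\ref{lmm:circeig}) is needed.
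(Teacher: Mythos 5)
Your proposal is correct and follows essentially the same route as the paper: vectorize~\eqref{eq:embp} via $\rmvec(AXC)=(C^\top\otimes A)\rmvec(X)$, note that the resulting coefficient matrix is the identity plus a real skew-symmetric matrix, and conclude nonsingularity since its eigenvalues all have real part $1$. The paper's proof is exactly this argument (by reference to Corollary~\ref{cor:emb}), so no further comment is needed.
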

    \begin{proof}
      By a similar argument to Corollary~\ref{cor:emb}, it is sufficient to show that the coefficient matrix of the linear system obtained by vectorizing~\eqref{eq:embp} is nonsingular.
      Applying the vec operator to~\eqref{eq:embp} yields
      \begin{equation}
        \left\{I_n\otimes I_m + \frac{1}{2}\left[\tilde{L}^{(\mathrm{P})}_n\otimes \tilde{L}_m^{(\mathrm{P})} - \left(\tilde{L}_n^{(\mathrm{P})}\right)^\top \otimes \left(\tilde{L}_m^{(\mathrm{P})}\right)^\top\right]\right\}\bm{x} = \bm{b}.\label{eq:embp-ls}
      \end{equation}
      Then, the coefficient matrix of~\eqref{eq:embp-ls} is nonsingular because it is a sum of the identity matrix $I_n\otimes I_m$ and the skew-symmetric matrix $\frac{1}{2}\left[\tilde{L}^{(\mathrm{P})}_n\otimes \tilde{L}_m^{(\mathrm{P})} - \left(\tilde{L}_n^{(\mathrm{P})}\right)^\top \otimes \left(\tilde{L}_m^{(\mathrm{P})}\right)^\top\right]$.
    \end{proof}
  \subsubsection{Reflexive boundary condition\label{subsubsec3.7.3}}
    When $F$ is the emboss filter, Eq.~\eqref{eq:gen-Sylvester:r} becomes
    \begin{equation}
      X+\frac{1}{2}\left[\tilde{L}_m^{(\mathrm{R})}X\left({\tilde{L}_n}^{(\mathrm{R})}\right)^\top - \tilde{U}_m^{(\mathrm{R})}X\left({\tilde{U}_n}^{(\mathrm{R})}\right)^\top\right]=B \label{eq:embr}
    \end{equation}
    where $\tilde{L}_n^{(\mathrm{R})} := I_n + 2L_n^{(\mathrm{R})}\in\mathbb{R}^{n\times n}$ and $\tilde{U}_n^{(\mathrm{R})} := I_n + 2U_n^{(\mathrm{R})}\in\mathbb{R}^{n\times n}$.
    Unfortunately, unlike the above cases, we cannot show the necessary and sufficient conditions for a unique solution at present.
    However, we can infer from the following discussion that Eq.~\eqref{eq:embr} has a unique solution for every $B$.
    
    Vectorizing Eq.~\eqref{eq:embr} yields the linear system~\eqref{eq:ls} with the coefficient matrix
    \begin{equation}
      \mathcal{F} = I_n\otimes I_m + \frac{1}{2}\left(\tilde{L}^{(\mathrm{R})}_n\otimes \tilde{L}_m^{(\mathrm{R})} - \tilde{U}_n^{(\mathrm{R})} \otimes \tilde{U}_m^{(\mathrm{R})}\right)\in\mathbb{R}^{mn\times mn}. \label{coef:embr}
    \end{equation}
    Therefore, the uniqueness of solutions of Eq.~\eqref{eq:embr} is reduced to the nonsingularity of~\eqref{coef:embr}.
    To support this, we provide some examples of the eigenvalue distribution of $\mathcal{F}$ in Figure~\ref{fig:eigdist} and check that zero is not an eigenvalue of~$\mathcal{F}$.
    For all cases of matrix sizes shown in Figure~\ref{fig:eigdist}, the real parts of all eigenvalues are $1$.
    Thus, from the examples, it can be conjectured that all eigenvalues of $\mathcal{F}$ for all $m,n$ can be written in the form $\lambda = 1 + \im\alpha(\ne 0), \alpha\in\mathbb{R}$.
    \begin{figure}[htbp]
      \centering
      \begin{tabular}{ccc}
        \begin{minipage}[t]{0.3\linewidth}
          \centering
          \includegraphics[scale=0.3]{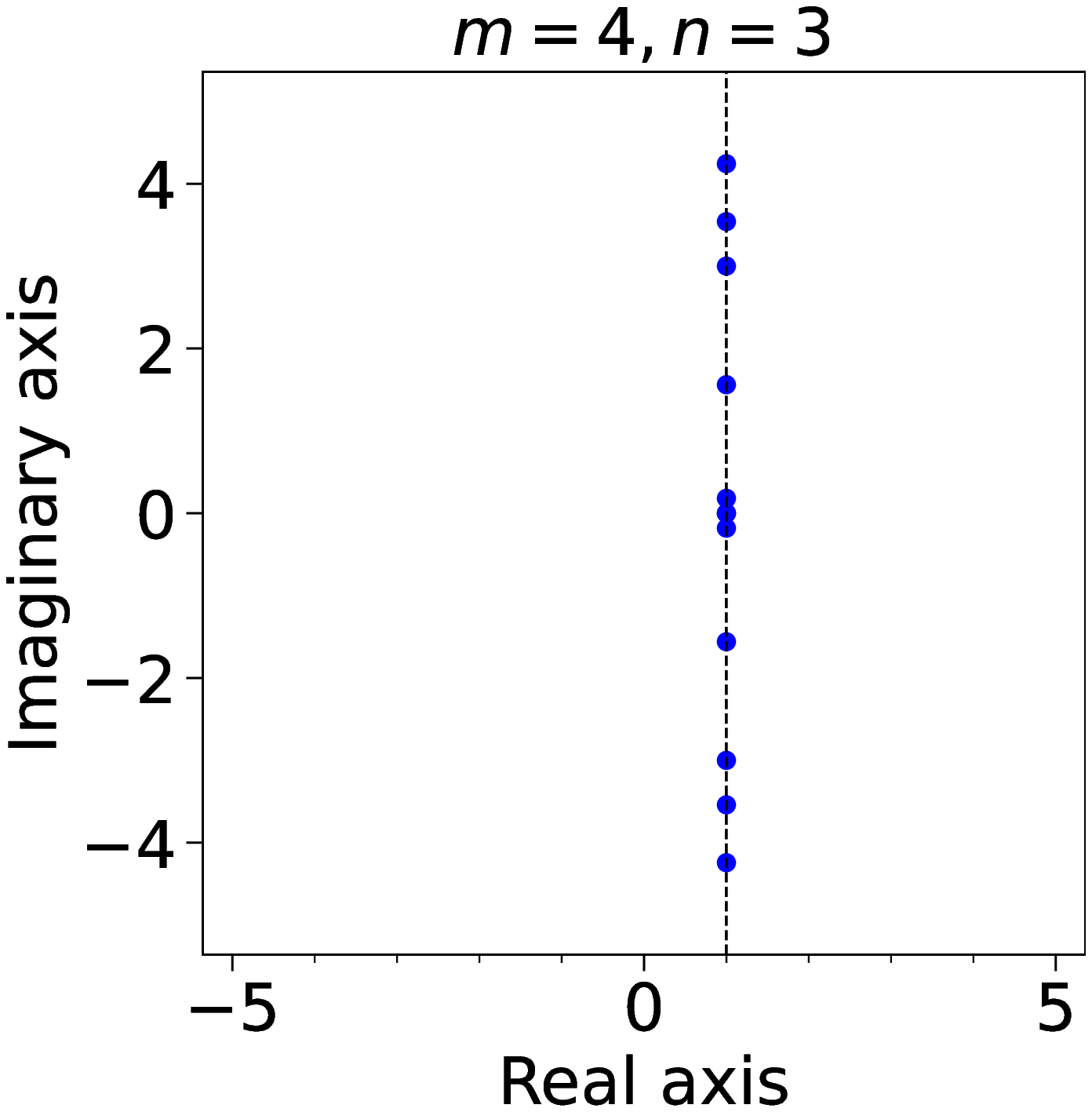}
          \label{fig:embr_4_3}
        \end{minipage} &
        \begin{minipage}[t]{0.3\linewidth}
          \centering
          \includegraphics[scale=0.3]{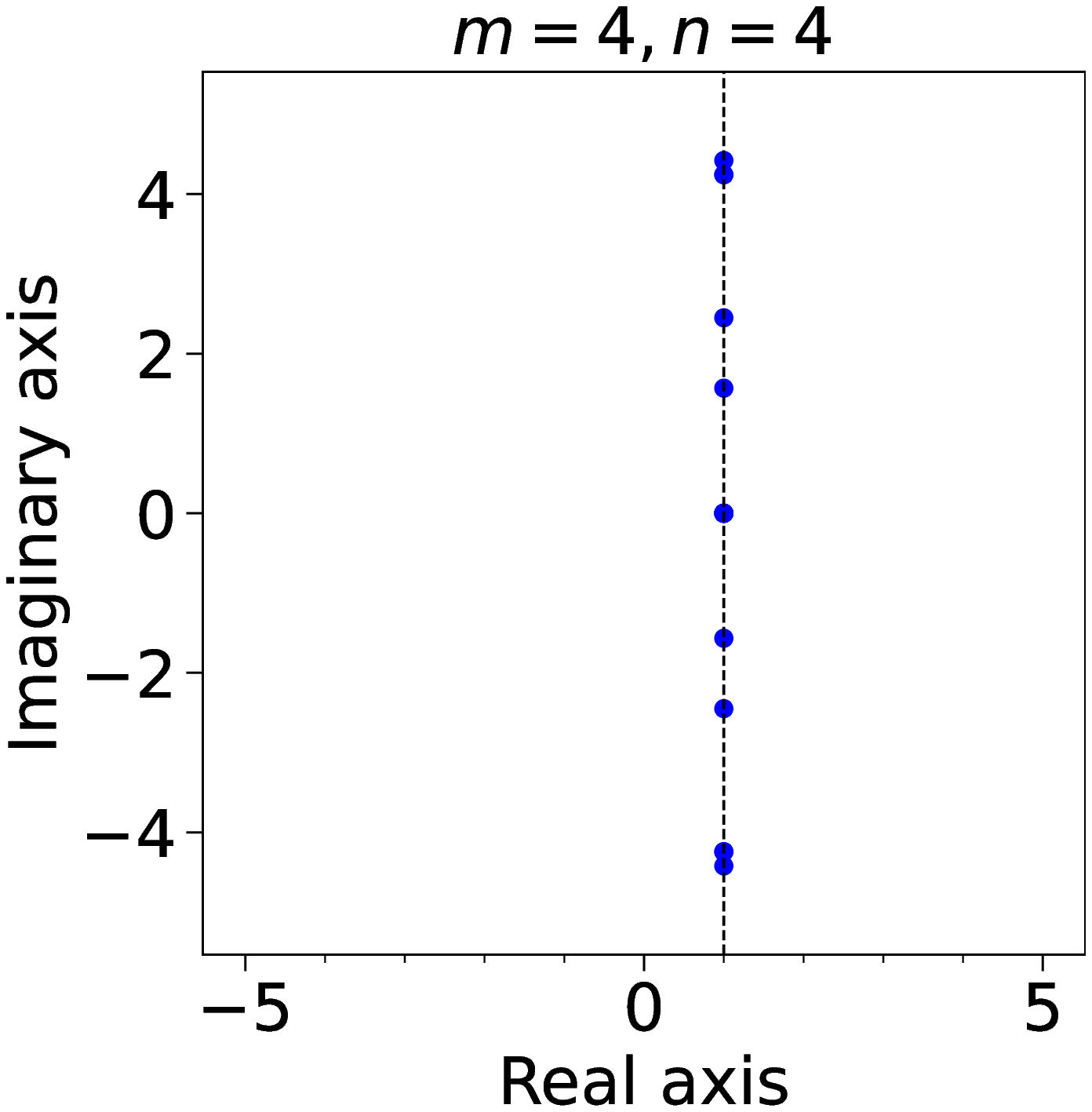}
          \label{fig:embr_4_4}
        \end{minipage} &
        \begin{minipage}[t]{0.3\linewidth}
          \centering
          \includegraphics[scale=0.3]{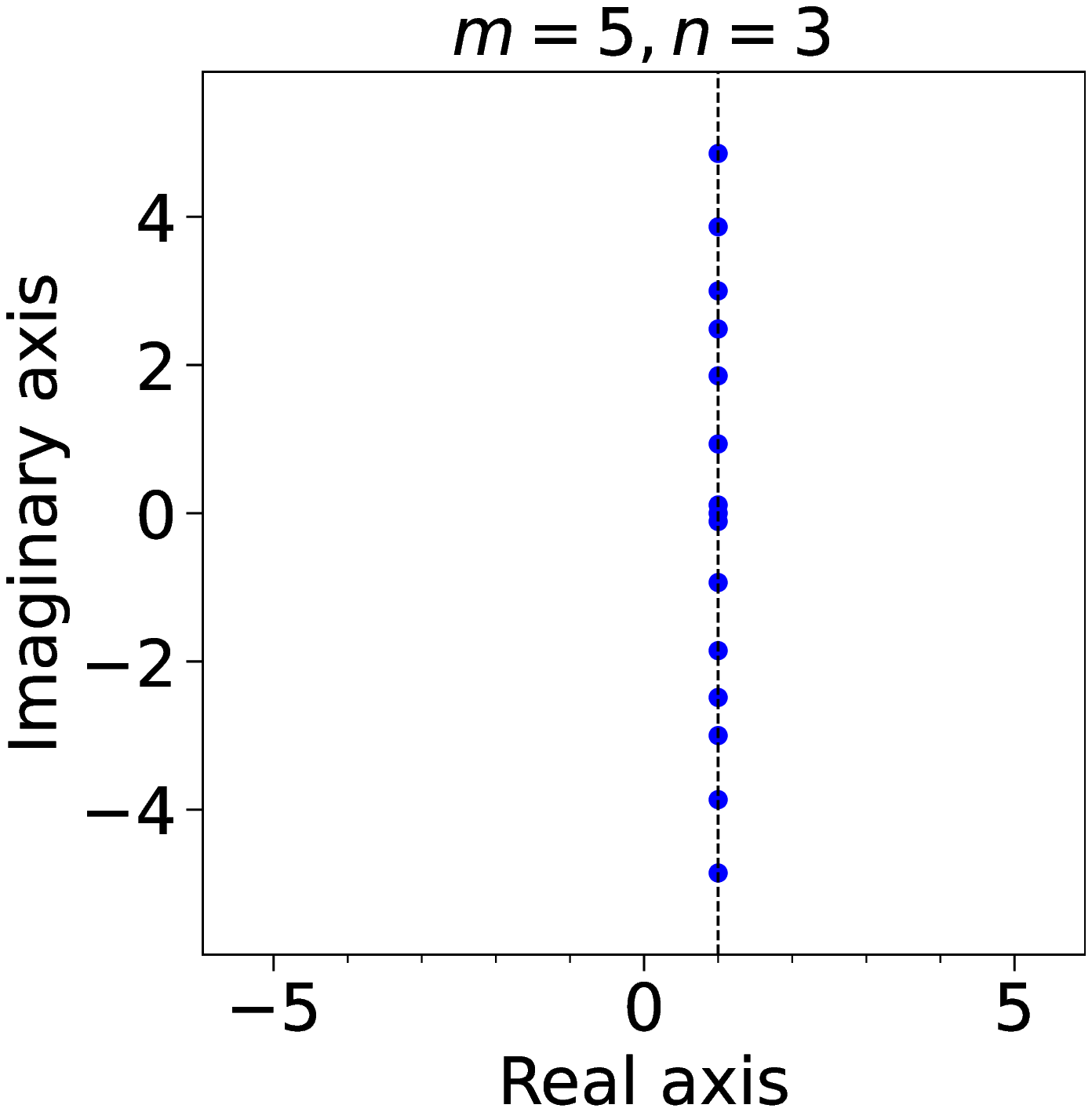}
          \label{fig:embr_5_3}
        \end{minipage} \\
        \begin{minipage}[t]{0.3\linewidth}
          \centering
          \includegraphics[scale=0.3]{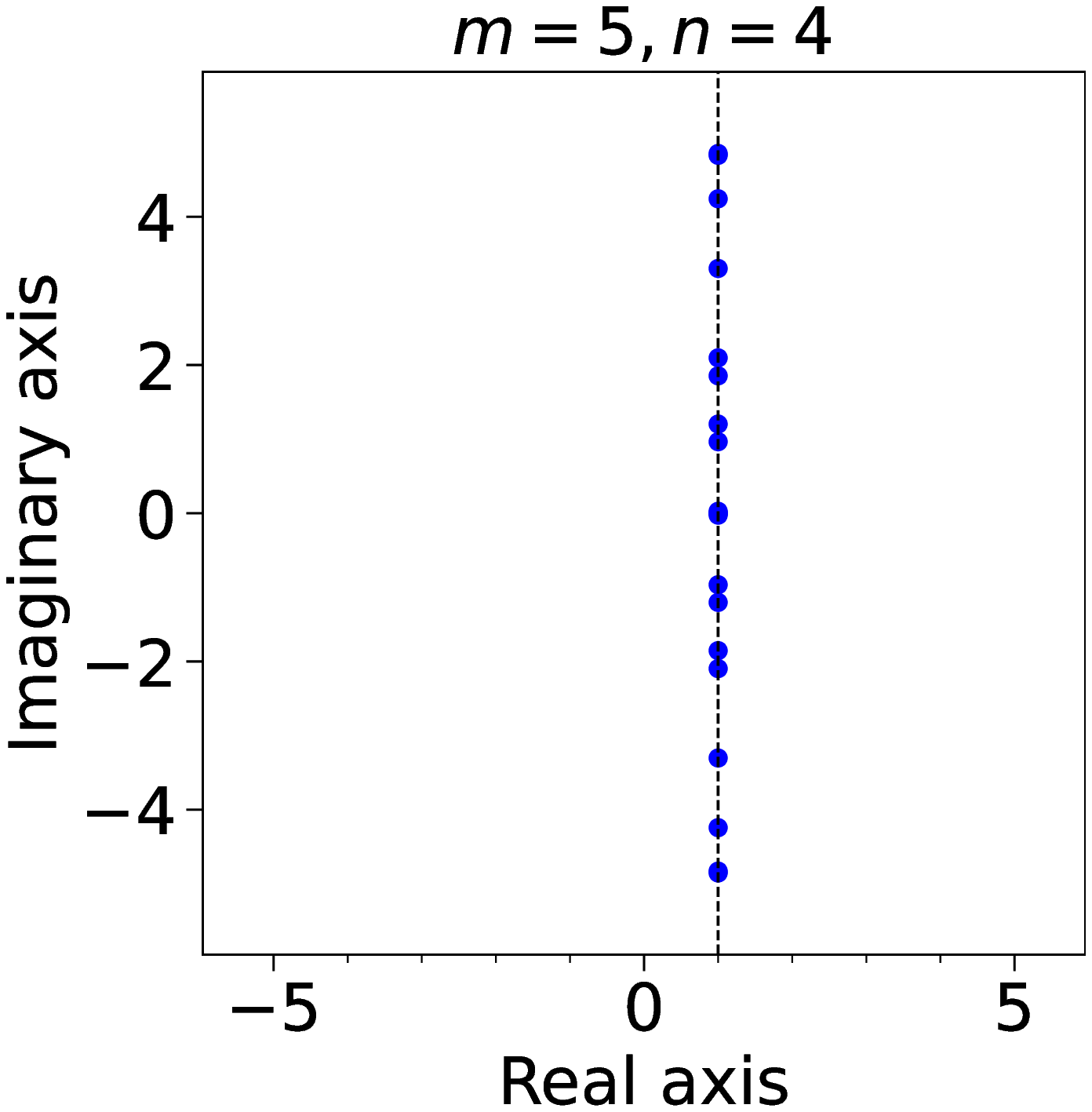}
          \label{fig:embr_5_4}
        \end{minipage} &
        \begin{minipage}[t]{0.3\linewidth}
          \centering
          \includegraphics[scale=0.3]{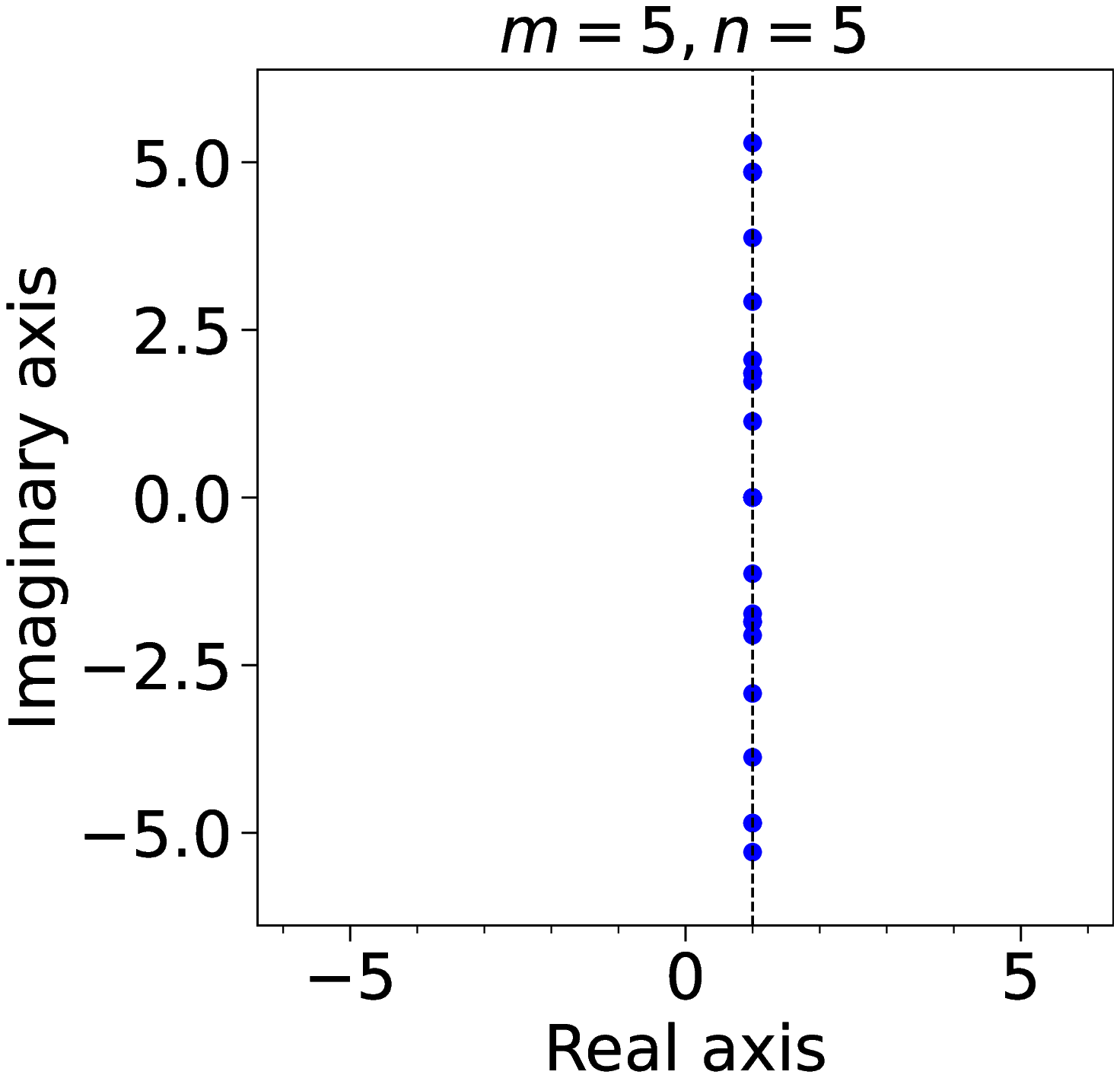}
          \label{fig:embr_5_5}
        \end{minipage} &
        \begin{minipage}[t]{0.3\linewidth}
          \centering
          \includegraphics[scale=0.3]{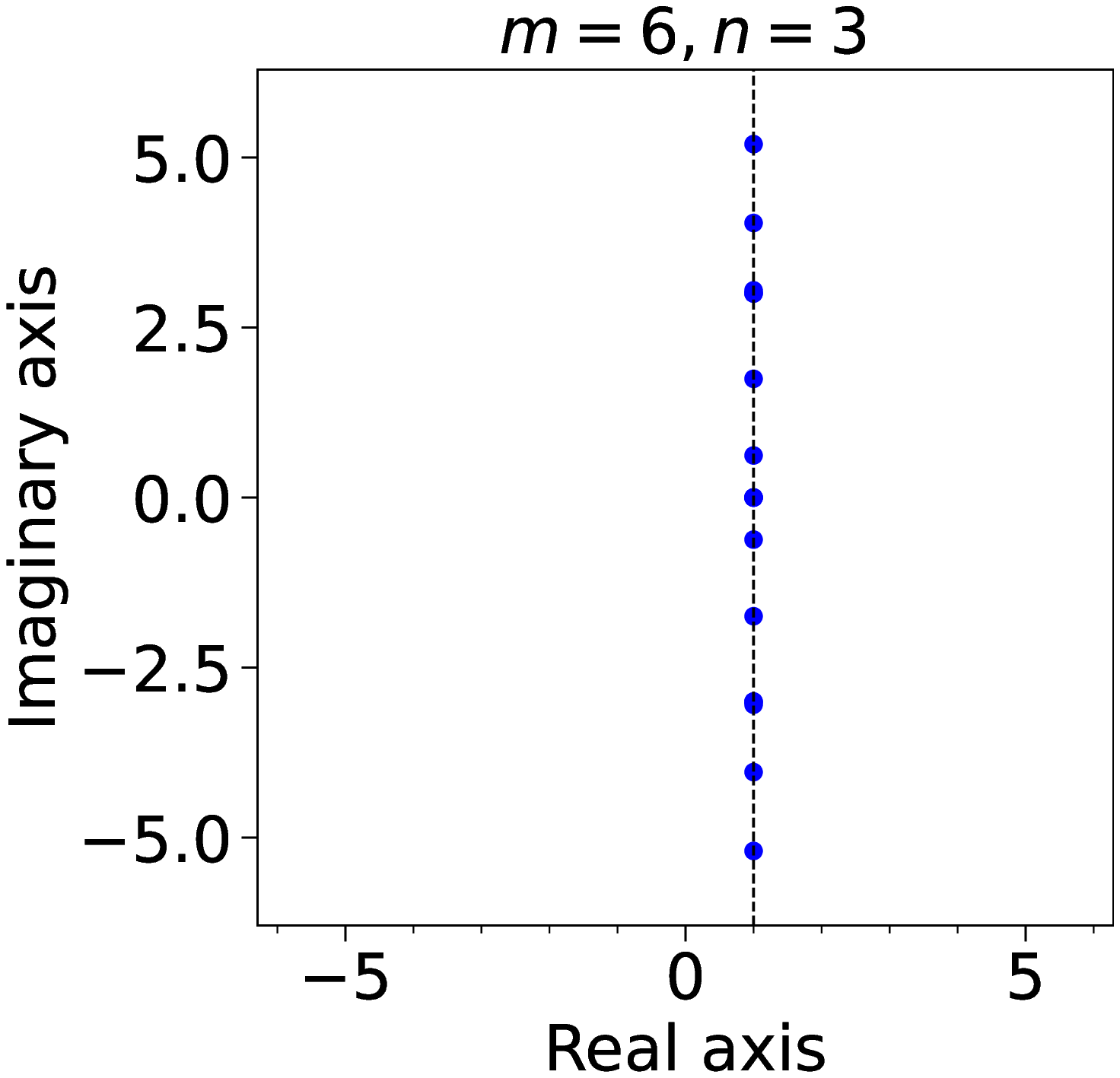}
          \label{fig:embr_6_3}
        \end{minipage} \\
        \begin{minipage}[t]{0.3\linewidth}
          \centering
          \includegraphics[scale=0.3]{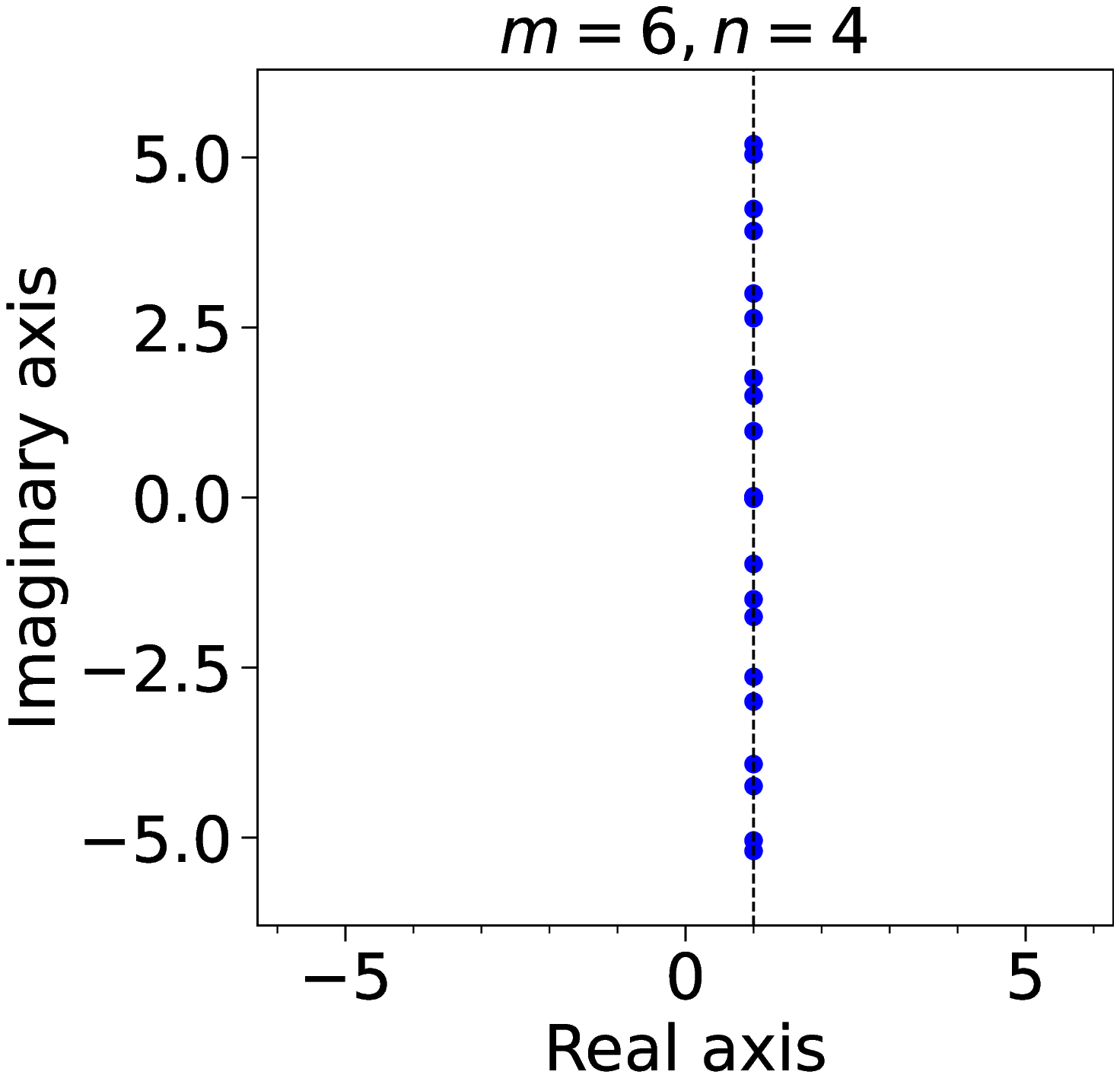}
          \label{fig:embr_6_4}
        \end{minipage} &
        \begin{minipage}[t]{0.3\linewidth}
          \centering
          \includegraphics[scale=0.3]{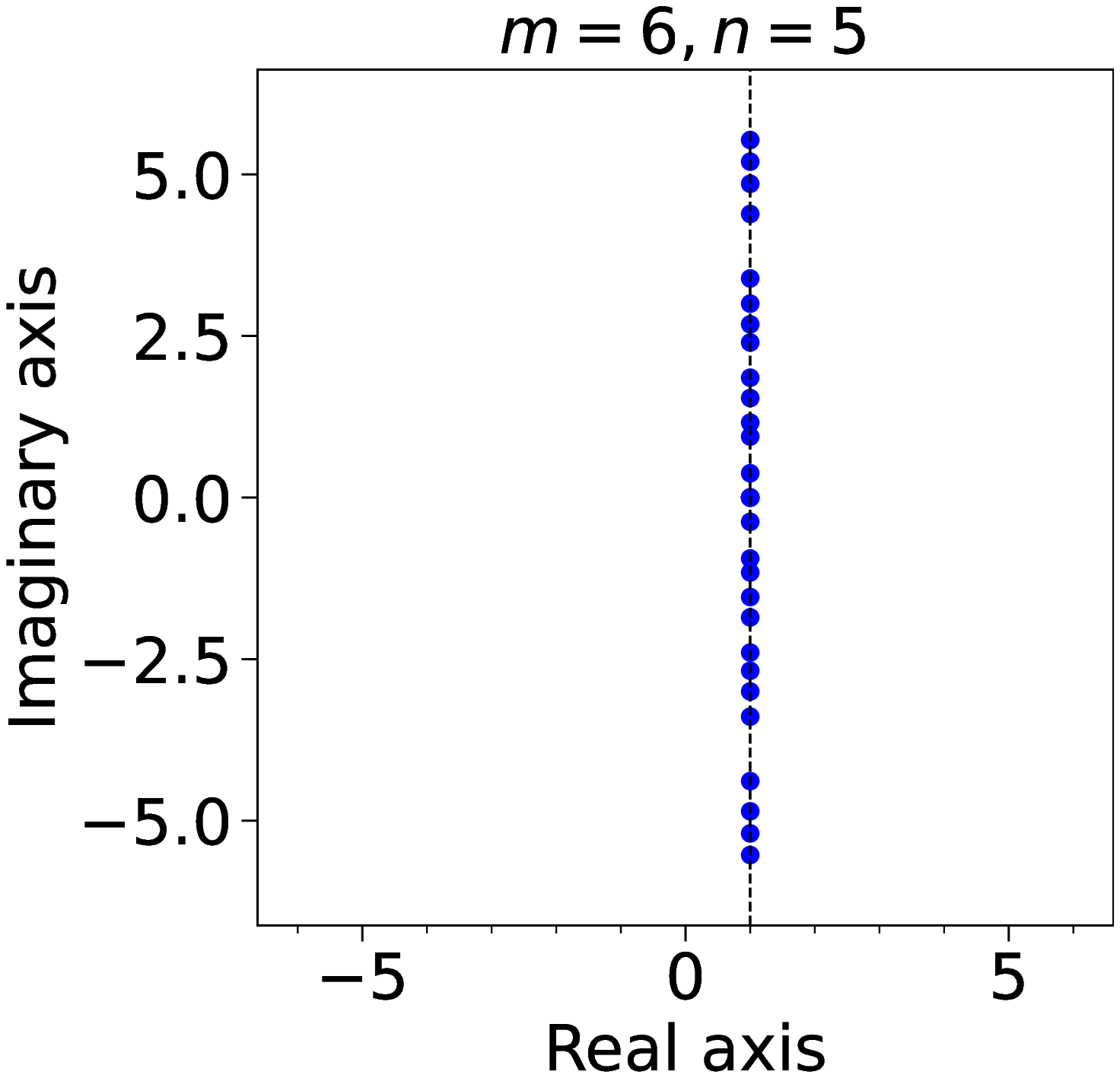}
          \label{fig:embr_6_5}
        \end{minipage} & 
        \begin{minipage}[t]{0.3\linewidth}
          \centering
          \includegraphics[scale=0.3]{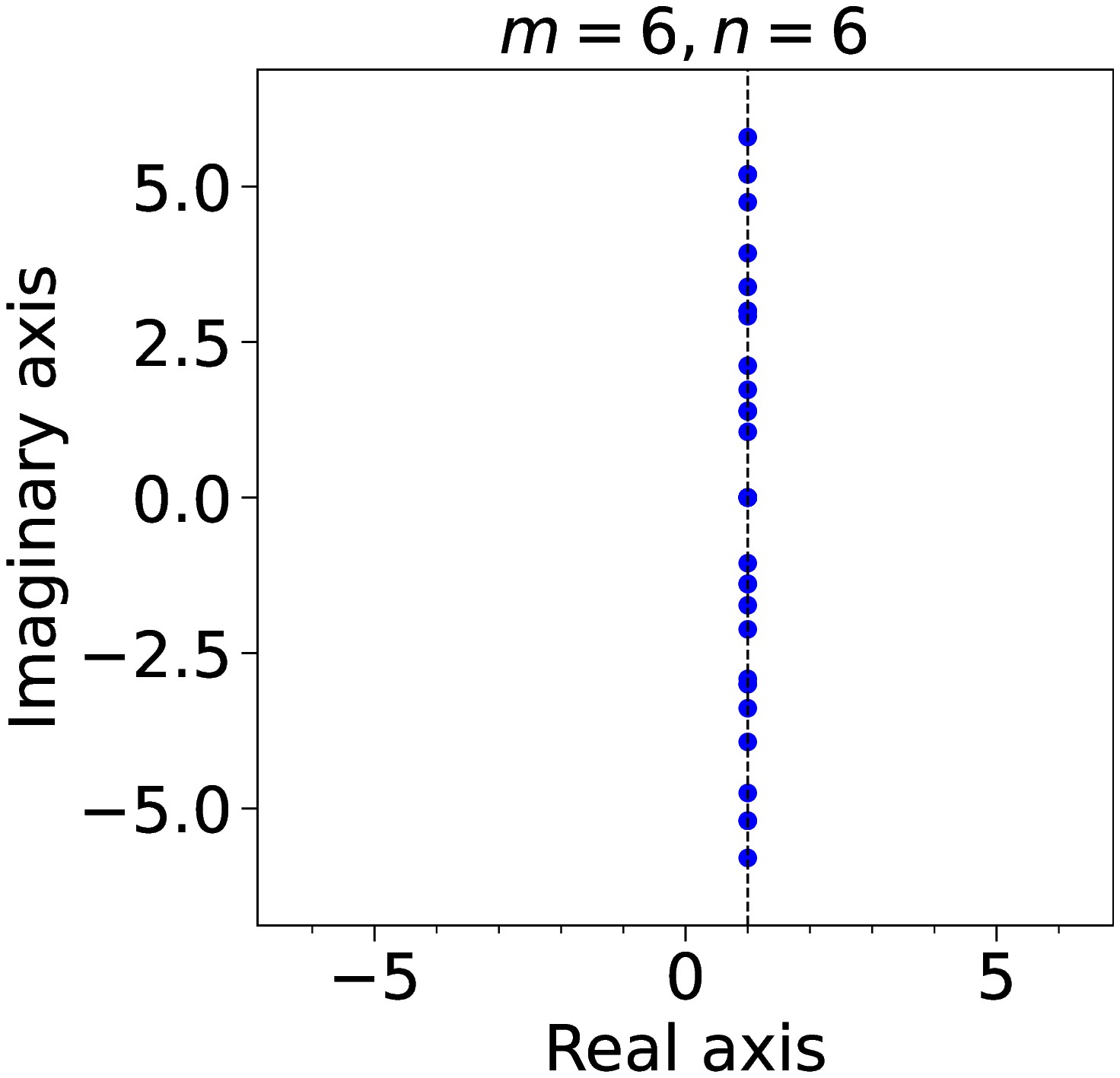}
          \label{fig:embr_6_6}
        \end{minipage} 
      \end{tabular}
      \caption{Eigenvalues of $\mathcal{F}$~\eqref{coef:embr}. 
      The dashed line represents $\mathrm{Re}(z) = 1, z\in\mathbb{C}$.
      }
      \label{fig:eigdist}
    \end{figure}

\section{Conclusion\label{sec4}}
  In this paper, it was shown that the convolution equation~\eqref{eq:convolution} under the zero, periodic, or reflexive boundary condition can be represented as a generalized Sylvester equation.
  Concretely, the result was obtained by using the shift matrices~\eqref{shiftmatrix}, the cyclic shift matrices~\eqref{cshiftmatrix}, and the tridiagonal matrices defined by~\eqref{qshiftmatrix}, for the zero, periodic, and reflexive boundary conditions, respectively.
  In addition, for some concrete examples arising from image processing, we showed that the generalized Sylvester equation that is equivalent to the convolution equation can be reduced to a simpler form, and we have characterized the unique solvability of the convolution equation.
  The necessary and sufficient conditions for the convolution equation~\eqref{eq:convolution} to have a unique solution for every right-hand side $B$ are summarized in Table~\ref{table}.

  In the future, we will consider finding numerical algorithms for the convolution equation~\eqref{eq:convolution} by utilizing the representation of the convolution equation as a generalized Sylvester equation.
  Proving the conjecture in Subsubsection~\ref{subsubsec3.7.3} is one of our future work.

  \begin{table}[hbtp]
    \caption{The necessary and sufficient conditions for the unique solvability of the convolution equation~\eqref{eq:convolution}.}\label{table}
    \centering
    \begin{tabular}{c||c|c|c}
      \hline
      & zero & periodic & reflexive\\
      \hline\hline
      BOX & $m,n\notin \{3l-1:l\in\mathbb{N}\}$ & $m,n\notin \{3l:l\in\mathbb{N}\}$ & $m,n\notin \{3l:l\in\mathbb{N}\}$ \\
      \hline
      GUS & $m,n\in \mathbb{N}$ & $m,n\notin \{2l:l\in\mathbb{N}\}$ & $m,n\in \mathbb{N}$ \\
      \hline
      EDA & $m,n\notin \{2l-1:l\in\mathbb{N}\}$ & no unique solution & no unique solution \\
      \hline
      EDB & $m,n\in \mathbb{N}$ & no unique solution & no unique solution \\
      \hline
      EDC & $m,n\in \mathbb{N}$ & no unique solution & no unique solution \\
      \hline
      SHP & $m,n\in \mathbb{N}$ & $m,n\in \mathbb{N}$ & $m,n\in \mathbb{N}$ \\
      \hline
      EMB & $m,n\in \mathbb{N}$ & $m,n\in \mathbb{N}$ & $m,n\in \mathbb{N}$ (conjecture) \\
      \hline
    \end{tabular}
  \end{table}

\section*{Acknowledgments}
  The authors appreciate Prof. Carlos Martins da Fonseca for informing us of the existing research on Lemma~\ref{lmm:qteig}.
  This work is supported by JSPS KAKENHI Grant number 21J15734.

\bibliographystyle{jabbrv}

\section*{Appendix: The Proof of Lemma~\ref{lmm:qteig}}
  Here we provide a proof of Lemma~\ref{lmm:qteig}.
  \begin{proof}
  We show that the eigenvalues $\lambda_1,\ldots,\lambda_n$ of
  \begin{equation}
    A=\mqty{
      \alpha + \gamma & \beta  &        &        & \\
      \gamma          & \alpha & \beta  &        & \\
                      & \gamma & \ddots & \ddots & \\
                      &        & \ddots & \alpha & \beta \\
                      &        &        & \gamma & \alpha + \beta
    }\in\mathbb{R}^{n\times n} \label{A_qtap}
  \end{equation}
  can be written by
  \begin{align}
    \begin{aligned}
      \lambda_k=
      \begin{cases}
        \alpha + 2\sqrt{\beta\gamma}\cos\left(-\frac{k\pi}{n}\right), &\qquad k=1,2,\ldots,n-1,\\
        \alpha + \beta + \gamma, &\qquad k=n.
      \end{cases}
    \end{aligned}\label{eig_pr}
  \end{align}
  Let $f_n(\lambda):=\det(A-\lambda I_n)$ and $\tilde{f}_n(\lambda):=\det(\tilde{A}-\lambda I_n)$ where
  \begin{equation}
    \tilde{A}=\mqty{
      \alpha & \beta  & & \\
      \gamma & \alpha & \ddots & \\
             & \ddots & \ddots & \beta \\
             &        & \gamma & \alpha
    }\in\mathbb{R}^{n\times n}.
  \end{equation}
  Then, by cofactor expansion, 
  \begin{align}
    f_n(\lambda)=&(\alpha -\lambda + \gamma)\mdet{
      \alpha - \lambda & \beta            &        & \\
      \gamma           & \ddots & \ddots  &           \\
                       & \ddots & \alpha - \lambda & \beta \\
                       &        & \gamma           & \alpha - \lambda + \beta      
    }_{n-1} - \beta\mdet{
      \gamma & \beta            &        &                  & \\
             & \alpha - \lambda & \beta  &                  & \\
             & \gamma           & \ddots & \ddots           & \\
             &                  & \ddots & \alpha - \lambda & \beta \\
             &                  &        & \gamma           & \alpha - \lambda + \beta      
    }_{n-1} \\
    =&(\alpha -\lambda + \gamma)\left[(\alpha - \lambda + \beta)\tilde{f}_{n-2} - \gamma\mdet{
      \alpha - \lambda & \beta            &        &                          & \\
      \gamma           & \ddots & \ddots &                          & \\
                       & \gamma           & \ddots & \beta                    & \\
                       &                  & \ddots & \alpha - \lambda & \\
                       &                  &        & \gamma & \beta
    }_{n-2}\right]
    \\ & \quad
     - \beta\gamma\mdet{
      \alpha - \lambda & \beta  &                  & \\
      \gamma           & \ddots & \ddots           & \\
                       & \ddots & \alpha - \lambda & \beta \\
                       &        & \gamma           & \alpha - \lambda + \beta      
    }_{n-2} \\
    =&(\alpha -\lambda + \gamma)\left[(\alpha - \lambda + \beta)\tilde{f}_{n-2} - \beta\gamma \tilde{f}_{n-3}\right] - \beta\gamma\left[(\alpha - \gamma + \beta)\tilde{f}_{n-3} - \gamma\mdet{
      \alpha - \lambda & \beta  &        & & \\
      \gamma           & \ddots & \ddots & & \\
                       & \ddots & \ddots & \beta &  \\
                       &        & \ddots & \alpha - \lambda & \\
                       &        &        & \gamma & \beta
    }_{n-3}\right] \\
    =& (\alpha -\lambda + \gamma)\left[(\alpha - \lambda + \beta)\tilde{f}_{n-2} - \beta\gamma \tilde{f}_{n-3}\right] - \beta\gamma\left[(\alpha - \lambda + \beta)\tilde{f}_{n-3} - \beta\gamma\tilde{f}_{n-4}\right] \\
    =& (\alpha -\lambda + \gamma)(\alpha - \lambda + \beta)\tilde{f}_{n-2}
    - \left[2(\alpha - \lambda) + \beta + \gamma \right] \beta\gamma\tilde{f}_{n-3}
    + (\beta\gamma)^2\tilde{f}_{n-4},\label{cofexp}
  \end{align}
  where $\left|\cdot\right|_n$ represents a determinant of $n\times n$ matrix.
  By cofactor expansion, we have the following recurrence relation:
  \begin{equation}
    \tilde{f}_n = (\alpha - \lambda)\tilde{f}_{n-1} - \beta\gamma\tilde{f}_{n-2},
  \end{equation}
  which implies
  \begin{align}
    \beta\gamma\tilde{f}_{n-3} &= (\alpha - \lambda)\tilde{f}_{n-2} - \tilde{f}_{n-1},\label{ss1}\\
    (\beta\gamma)^2\tilde{f}_{n-4} &= (\alpha - \lambda)\beta\gamma\tilde{f}_{n-3} - \beta\gamma\tilde{f}_{n-2} = -(\alpha - \lambda)\tilde{f}_{n-1} + \left[(\alpha - \lambda)^2- \beta\gamma\right]\tilde{f}_{n-2}.\label{ss2}
  \end{align}
  By substituting~\eqref{ss1} and~\eqref{ss2} into~\eqref{cofexp}, we obtain
  \begin{align}
    f_n(\lambda) =& (\alpha -\lambda + \gamma)(\alpha - \lambda + \beta)\tilde{f}_{n-2}
    - \left[2(\alpha - \lambda) + \beta + \gamma \right] \left[(\alpha - \lambda)\tilde{f}_{n-2} - \tilde{f}_{n-1}\right]
    \\ & \quad 
    -(\alpha - \lambda)\tilde{f}_{n-1} + \left[(\alpha - \lambda)^2- \beta\gamma\right]\tilde{f}_{n-2} \\
    =& \left[(\alpha - \lambda)^2 + (\beta + \gamma)(\alpha - \lambda) + \beta\gamma - 2(\alpha - \lambda)^2 - (\beta + \gamma)(\alpha - \lambda) + (\alpha - \lambda)^2 - \beta\gamma\right]\tilde{f}_{n-2}
    \\ & \quad
     + (\alpha + \beta +\gamma - \lambda)\tilde{f}_{n-1} \\
     =& (\alpha + \beta +\gamma - \lambda)\tilde{f}_{n-1}.
  \end{align}
  Thus, the eigenvalues of~\eqref{A_qtap} are $\lambda_n = \alpha + \beta + \gamma$ and roots of $\tilde{f}_{n-1}(\lambda)$.
  Since $\tilde{f}_{n-1}$ is the charactaristic polynomial of $(n-1)\times (n-1)$ tridiagonal Toeplitz matrix, 
  its roots are given by
  \begin{equation}
    \lambda_k = \alpha + 2\sqrt{\beta\gamma}\cos\left(\frac{k\pi}{n}\right),\qquad k=1,2,\ldots, n-1.
  \end{equation}
  Hence, the proof is complete.
  \end{proof}

\end{document}